\documentclass[a4paper,12pt]{article}
\usepackage[utf8]{inputenc}
\usepackage{amsmath,amssymb,amsthm}
\usepackage{geometry}
\usepackage{fancyhdr}
\usepackage{titlesec}
\usepackage{xcolor}
\usepackage{tikz-cd}
\usepackage{stmaryrd}
\usepackage{tikz}
\usepackage{comment}
\usepackage{hyperref}
\usepackage{cleveref}
\usetikzlibrary{arrows.meta,positioning,calc}
\titleformat{\section}{\Large\bfseries\color{black}}{\thesection}{1em}{}
\titleformat{\subsection}{\large\bfseries\color{black}}{\thesubsection}{1em}{}
\titleformat{\subsubsection}{\normalsize\bfseries\color{black}}{\thesubsubsection}{1em}{}

\theoremstyle{plain}
\newtheorem{theorem}{Theorem}[section]
\newtheorem{proposition}[theorem]{Proposition}
\newtheorem{lemma}[theorem]{Lemma}
\newtheorem{corollary}[theorem]{Corollary}
\newtheorem{conjecture}[theorem]{Conjecture}

\theoremstyle{definition}
\newtheorem{definition}[theorem]{Definition}
\newtheorem{example}[theorem]{Example}

\theoremstyle{remark}
\newtheorem{remark}[theorem]{Remark}

\newcommand{\R}{\mathbb{R}}

\newcommand{\supp}{\text{supp}}
\newcommand{\Cliff}{\text{Cliff}}

\newcommand{\prop}{\hbox{prop}}
\newcommand{\Pen}{\hbox{Pen}}
\newcommand{\Ind}{\text{Ind}}
\newcommand{\ev}{\text{ev}}

\begin{document}

\begin{center}
    \Large\textbf{The Coarse Novikov Conjecture for Finite Products of Fibred Coarsely Embeddable Spaces}

    \vspace{0.5cm}
    \normalsize Liang Guo, Zheng Luo, Qin Wang

    \vspace{0.5cm}
    \today

\end{center}
\begin{abstract}
In this paper, we prove that the coarse Novikov conjecture holds for finite products of bounded-geometry proper metric spaces whose factors admit fibred coarse embeddings into possibly different target spaces.
A key ingredient is the construction of suitable coarsely proper algebras adapted to product spaces. 
Our other main tool is an iterated approach to relative higher index theory. By constructing the corresponding multi-stage Roe algebras and index maps, we establish a reduction theorem that bridges these iterated relative statements with the global coarse Novikov conjecture.
\end{abstract}

\section{Introduction}

The coarse Novikov conjecture provides a large-scale index
theoretic framework linking the geometry of a proper metric space to the $K$-theory of its Roe algebra. For metric spaces satisfying various geometric conditions, the coarse Novikov conjecture has been established in a number of important cases \cite{Yu98,Yu00,KY12,CWY13}.

However, it is still unknown in general whether the coarse Novikov conjecture is preserved under finite products.
In \cite{DG24,Zhang25}, the authors studied the coarse Baum--Connes conjecture for product spaces.
Although these works obtained positive results, the versions of the coarse Baum--Connes conjecture considered there are all formulated with coefficients.

Under the weaker assumption of admitting a fibred coarse embedding, \cite{GWZ25} proves that, in the sparse setting, i.e.\ for coarse disjoint unions of finite metric spaces, the coarse Novikov conjecture holds for finite products of sparse spaces. More precisely, if \(X_1,\dots,X_N\) are \emph{sparse} spaces admitting fibred coarse embeddings into suitable model targets, then the coarse Novikov conjecture holds for the product
\[
\prod_{i=1}^N X_i.
\]
However, products of sparse spaces form only a rather restricted class of product spaces; many natural examples arising in coarse geometry lie outside the sparse framework.

In this paper, we remove the sparsity assumption and allow the factors to admit fibred coarse embeddings into possibly different target spaces. In this more general setting, we obtain the following result.

\begin{theorem}\label{mainthm}
Let $N\ge 2$, and let $X_1,\dots,X_N$ be proper metric spaces with bounded geometry. 
Assume that for each $i$ the space $X_i$ admits a fibred coarse embedding into one of the following model targets:
a Hilbert space, a Hadamard manifold, or an $\ell^p$-space with $1\le p<\infty$.
Then the \textbf{coarse Novikov conjecture holds for the finite product}
\[
X\ :=\ \prod_{i=1}^N X_i .
\]
\end{theorem}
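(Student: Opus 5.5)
We will prove that the assembly map $\mu: \lim_{d\to\infty} K_*(P_d(X)) \to K_*(C^*(X))$ is injective. The strategy combines the fibred-coarse-embedding machinery for each factor with an iterated relative index theory, carried out one factor at a time.

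\textbf{Step 1: localize each factor.} Fix fibred coarse embeddings of $X_i$ into the model targets $M_i$, which are Hilbert spaces, Hadamard manifolds or $\ell^p$-spaces. Each embedding comes with a field of targets $\{M_{i,x}\}$ and trivializations $t_{i,C}$ over bounded sets $C$ away from compact subsets $K_i$. For each $i$ we form the coarsely proper algebra $\mathcal A_i$ built from $C_0$-functions on the fibres (Clifford-type algebras in the Hilbert and Hadamard cases, and their $\ell^p$ analogues), glued using the trivializations. On the product we take
\[
\mathcal A=\mathcal A_1\hat\otimes\cdots\hat\otimes\mathcal A_N,
\]
suitably twisted by the trivializations and supported near the diagonal of each fibre bundle. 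This is the ``coarsely proper algebra adapted to the product'' mentioned in the abstract. The essential point is that the fibre of $\mathcal A$ over $(x_1,\dots,x_N)$ is the product of the individual fibres $M_{1,x_1}\times\cdots\times M_{N,x_N}$. This product is again a model target of a proper kind, namely a product of Hadamard manifolds or Hilbert/$\ell^p$ spaces, so a Bott--Dirac element exists fibrewise.

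\textbf{Step 2: iterated relative Roe algebras.} Because the trivializations exist only away from $K_i$, injectivity cannot be proved directly. Instead we define multi-stage Roe algebras. Stage $k$ is relative to the ideal generated by operators supported near
\[
\bigcup_{i\le k} K_i\times\prod_{j\ne i}X_j .
\]
We then set up the corresponding localization algebras and index maps. The reduction theorem has two parts. First, a five-lemma argument along the short exact sequences of ideals reduces the coarse Novikov conjecture for $X$ to the relative coarse Novikov statements at each stage. Second, the terms supported near $K_i\times(\text{rest})$ are coarsely equivalent to a product with fewer non-compact factors, so they are handled by induction on $N$. The base of this induction is the known single-factor fibred result for Hilbert spaces, Hadamard manifolds and $\ell^p$-spaces.

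\textbf{Step 3: twisted relative statement.} For the final, fully relative stage, we build the twisted Roe algebra $C^*(P_d(X),\mathcal A)$ and its localization counterpart. We prove the twisted assembly map is an isomorphism, using a Mayer--Vietoris and cutting-and-pasting argument over the fibres together with the properness of $\mathcal A$. On pieces of uniformly bounded fibre diameter, the twisted algebras are, up to trivialization, of the form $\prod C^*(\cdot)\otimes C_0(\text{ball})$ with uniformly coarse families. For these pieces the local isomorphism is standard.

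\textbf{Step 4: Bott map and conclusion.} We then construct the Bott map $\beta$ from the relative localization algebra into the twisted version. Using the fibrewise Bott--Dirac element on the product targets, we show $\beta$ is an isomorphism on localization $K$-theory; Kasparov products let us do this factor-by-factor. Commutativity $\mu_{\mathcal A}\circ\beta=\beta\circ\mu$ together with Step 3 then gives injectivity.

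\textbf{Expected main obstacle.} The hardest step is the interaction between Step 1 and Step 2. The trivializations of different factors are defined on bounded sets of different factors, so $\mathcal A$ is only asymptotically well defined. We must therefore check that the Bott maps are compatible with the multi-stage quotient maps. This compatibility requires asymptotic commutation estimates for mixed products of propagation-controlled operators.

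The case where the factors have mixed types of target, for example $\ell^p$ with $p\neq 2$ alongside a Hadamard manifold, is handled by the tensor product construction, which never requires a common target. This is where the argument departs from the sparse case treated in \cite{GWZ25}. It also means the $\ell^p$ factors need the Kasparov–Yu style algebras, and the resulting classes are defined only in $K$-theory.
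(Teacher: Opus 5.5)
Your overall architecture matches the paper's. The factorwise embeddings combine into a fibred coarse embedding of $X$ into $\prod_i B_i$ relative to the union $F$ of the faces, and the graded tensor product of the factor algebras is a coarsely proper algebra for it. The twisted relative assembly map and the localization Bott map are isomorphisms, which gives the relative Novikov statement for $(X,F)$. Since $I_G(Y)=\overline{I_G(Y_1)+\cdots+I_G(Y_N)}$ for a finite union, this is the top stage of the iterated theory, and the faces are then peeled off one at a time by induction on $N$.

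The gap is in the peeling step, which you describe as a five-lemma argument. Let $A$ and $B$ be the stage-$(n-1)$ and stage-$n$ algebras, and $J$ the kernel of $A\to B$ (the face $F^{(n)}$ modulo the earlier faces), on both the localization and the Roe side. If $\mu_A(a)=0$, injectivity of $\mu_B$ gives $a=(\iota_L)_*(j)$ with $\iota_*\mu_J(j)=0$. Hence $\mu_J(j)=\partial b$ for some $b$ in $K_{*+1}$ of the stage-$n$ Roe algebra. To conclude you must write $b=\mu_B(b')$, i.e.\ you need \emph{surjectivity} of the stage-$n$ assembly map in the adjacent degree. That is a Baum--Connes-type statement. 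Your Step 4 only yields injectivity; surjectivity would need the Roe-side Bott map to be injective. Surjectivity also genuinely fails for some fibred coarsely embeddable spaces, e.g.\ expander box spaces of free groups. With only injectivity of $\mu_J$ and $\mu_B$, the four/five lemma does not close.

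The missing idea is the continuous/sparse dichotomy for the factor $X_n$ being peeled. The paper uses it to kill one of the two obstructions, so that injectivity hypotheses suffice.
\begin{itemize}
\item If $X_n$ has an infinite coarse component, then for large $d$ the space $P_d(X_n)$ contains a ray from $o_n$. So the kernel's localization algebra maps into stage $n-1$ through the localization algebra of $P_d(F^{(n)})\times\R_+$ modulo its intersection with the earlier faces. The earlier faces only constrain the other coordinates, so that intersection is again of the form $(\cdots)\times\R_+$. Both are flasque, so $\lim_d(\iota_L)_*=0$ and $(\pi_L)_*$ is injective; injectivity of $\mu_B$ alone then suffices.
\item If $X_n$ is a coarse disjoint union of finite spaces, compare $C^*(X)$ with the Roe algebra of the separated disjoint union along the $n$-th coordinate. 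This gives $\mathbb{C}[X]=\mathbb{C}[X']+\mathbb{C}[X;F^{(n)}]$ and hence injectivity of $\iota_*$ on the Roe side. Injectivity of $\mu_J$ and $\mu_B$ then suffices.
\end{itemize}

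Two further corrections. First, $J$ is the face \emph{relative to its intersections with the earlier faces}. So your induction must carry the iterated relative statements for $(F^{(n)};F^{(1)}\cap F^{(n)},\dots,F^{(n-1)}\cap F^{(n)})$, as in part (2) of the paper's theorem, not just the coarse Novikov conjecture for products of fewer factors. The base case likewise needs the relative statements for $(X_i,o_i)$. Second, the obstacle you anticipate, compatibility of Bott maps with the multi-stage quotients, does not arise. The twisted argument is used only once, at the fully relative stage, and all the peeling is untwisted.
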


This theorem provides a large new supply of examples of spaces satisfying the coarse Novikov conjecture, including in particular finite products of warped cones under natural geometric and analytic assumptions. In this sense, our results provide new progress toward understanding the coarse Novikov conjecture for finite products.

As an application, we have the following corollary.
\begin{corollary}
Let \(N,M\ge 1\).

For \(i=1,\dots,N\), let \(\Gamma_i\) be a finitely generated residually finite hyperbolic group, and
let \(\{\Gamma_{i,n}\}_{n\in\mathbb N}\) be a filtration of \(\Gamma_i\).
Write \(\Box_{\{\Gamma_{i,n}\}}\Gamma_i\) for the associated box space.

For \(j=1,\dots,M\), let \(\Lambda_j\) be a finitely generated discrete group acting on a compact manifold \(M_j\),
and assume that
\begin{enumerate}
  \item the action \(\Lambda_j\curvearrowright M_j\) is linearisable in an \(\ell^{p_j}\)-space and is free
  (or \(M_j\) contains a dense free orbit);
  \item \(\Lambda_j\) admits a proper affine isometric action on an \(\ell^{p_j}\)-space,
\end{enumerate}
for some \(1\le p_j<\infty\).
Let \(\mathcal O_{\Lambda_j}(M_j)\) be the corresponding warped cone.

Then the coarse Novikov conjecture holds for the mixed finite product
\[
X\ :=\ \Big(\prod_{i=1}^N \Box_{\{\Gamma_{i,n}\}}\Gamma_i\Big)\ \times\
\Big(\prod_{j=1}^M \mathcal O_{\Lambda_j}(M_j)\Big).
\]
\end{corollary}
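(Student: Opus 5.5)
The corollary follows from Theorem~\ref{mainthm} once each factor is shown to be a bounded-geometry proper metric space admitting a fibred coarse embedding into one of the allowed model targets. Taking the $N+M\ge 2$ factors together, the theorem then applies directly to their product. If $N+M=1$, the single-factor statement is the known coarse Novikov conjecture for spaces fibred coarsely embeddable into these targets, which the paper's methods also cover with trivial iteration. So the plan is to verify the hypotheses factor by factor, in two cases.

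For the box spaces, each $\Box_{\{\Gamma_{i,n}\}}\Gamma_i$ is a coarse disjoint union of finite Cayley graphs $\Gamma_i/\Gamma_{i,n}$ with uniformly bounded degree, so it has bounded geometry. Since $\Gamma_i$ is hyperbolic, it acts properly and cocompactly on a hyperbolic space. By Yu's theorem, hyperbolic groups admit proper affine isometric actions on $\ell^p$ for large $p$, and in particular they are a-T-menable in the $\ell^p$ sense. Chen--Wang--Wang showed that a box space of a residually finite group whose group coarsely embeds (equivariantly) into a Hilbert space or $\ell^p$ admits a fibred coarse embedding into the same target. Using the $\ell^p$ version for hyperbolic groups, I get a fibred coarse embedding of $\Box_{\{\Gamma_{i,n}\}}\Gamma_i$ into some $\ell^{p}$ with $1\le p<\infty$. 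Alternatively, I could target a Hilbert space directly, since hyperbolic groups are a-T-menable. That holds only for some hyperbolic groups, though, so the $\ell^p$ route is the safe choice.

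For the warped cones, $\mathcal O_{\Lambda_j}(M_j)$ has bounded geometry because $M_j$ is a compact manifold and $\Lambda_j$ is finitely generated. This is the standard fact that warped cones over compact Riemannian manifolds are quasi-isometric to bounded-geometry graphs, and it is enough up to coarse equivalence, since the coarse Novikov conjecture is a coarse invariant. For the fibred embedding, I would invoke the known result (Sawicki--Wu and related work) that under hypotheses (1) and (2) the warped cone admits a fibred coarse embedding into $\ell^{p_j}$. Freeness, or a dense free orbit, is what makes the local fibres well defined. Linearisability supplies the fibrewise isometric structure, and the proper affine action supplies the properness of the embedding.

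The main obstacle is purely one of matching hypotheses. I must check that the cited fibred-embedding results produce exactly the notion of fibred coarse embedding used in Theorem~\ref{mainthm}, that is, into $\ell^p$ with $1\le p<\infty$. I must also check that passing to coarsely equivalent bounded-geometry models is harmless. Both points hold because fibred coarse embeddability and the coarse Novikov conjecture are invariant under coarse equivalence. With all factors verified, Theorem~\ref{mainthm} yields the claim for the mixed product $X$.
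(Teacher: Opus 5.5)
Your proposal is correct and follows the same route as the paper: it verifies that each factor fibred-coarsely embeds into some $\ell^p$-space, using Yu's proper affine $\ell^p$-actions for hyperbolic groups together with the Chen--Wang--Wang box-space construction for the $\Box_{\{\Gamma_{i,n}\}}\Gamma_i$ and the known warped-cone fibred embedding result for the $\mathcal O_{\Lambda_j}(M_j)$, and then applies the main theorem to the product. Two harmless slips: the case $N+M=1$ cannot occur since $N,M\ge 1$, and hyperbolic groups are not a-T-menable in general (e.g.\ cocompact lattices in $\mathrm{Sp}(n,1)$), which is exactly why the $\ell^p$ target, as you conclude, is the right choice.
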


To establish our main result, our starting point is the \emph{relative higher index theory} for a pair $(X, Y)$, where $Y\subseteq X$, recently introduced in \cite{GWZ25}. In their work, this framework was utilized to conduct a preliminary study of the coarse Novikov conjecture for product spaces. Specifically, they proved that if a collection of spaces all admit a fibred coarse embedding (FCE) into Hilbert space, their product space admits an FCE into a Hilbert space \emph{relative to its boundary} (we recall this notion in \Cref{fcebdry}).

To establish our main result in full generality, we extend this relative framework by removing two major restrictions. First, we allow the factors to admit fibred coarse embeddings into possibly different target spaces, rather than a single common Hilbert space. To do this, we introduce a new construction of a \emph{coarsely proper algebra} adapted to product spaces (see \Cref{exa: coarsely proper algebra}). Secondly, we drop the assumption that the underlying spaces are \emph{sparse}. In the sparse setting, the reduction process relies on reducing the dimension of the skeleton. However, for general proper metric spaces, such a skeleton-based dimension reduction fails.

To overcome this obstacle, we develop \emph{iterated higher index theory}. 
Rather than reducing the dimension of the skeleton, our approach proceeds by removing the faces of the product one at a time; see \Cref{thm:iterated-reduction} for the precise reduction statement.

An advantage of this face-by-face procedure is that it applies even when the intermediate pairs no longer admit relative fibred coarse embeddings. 
Thus, the iterated framework yields new examples of pairs satisfying the relative coarse Novikov conjecture beyond the scope of the relative FCE method.

\noindent\textbf{Organization.}
The paper is organized as follows. 
In Section~2, we recall Roe algebras and the coarse Baum--Connes and coarse Novikov conjectures.
In Section~3, we first recall the relative higher index theory. Then, for product spaces whose factors admit FCE into different target spaces, we construct a new coarsely proper algebra. Finally, we use this new algebra to prove that the associated twisted assembly map is an isomorphism.
In Section~4, we develop \emph{iterated higher index theory}, relate the finite iterated theory to the usual relative theory associated with the union of the closed subsets, and establish the face-by-face reduction used in the proof of the main result. We also introduce \emph{countably iterated ghostly ideals} and point out a phenomenon that does not occur in the finite case.
Finally, in Section~5, we establish the main theorem on the coarse Novikov conjecture for products in a more general setting, and derive further consequences and examples.

\paragraph{Acknowledgement.}
Z.~Luo is partially supported by the National Natural Science Foundation of China (No.~12401155).
Q.~Wang is partially supported by the National Natural Science Foundation of China (No.~12571135).

\section{Preliminaries}\label{sec:prelim}

We briefly review the basic notions of the coarse Baum--Connes conjecture and coarse Novikov conjecture in this part.
Throughout, \(X\) denotes a proper metric space, i.e.\ every closed bounded subset of \(X\) is compact.
Let \(C_0(X)\) be the \(C^*\)-algebra of complex-valued continuous functions on \(X\) vanishing at infinity.

\medskip
\noindent\textbf{\(X\)-modules.}
An \emph{\(X\)-module} is a separable Hilbert space \(H_X\) equipped with a faithful, non-degenerate
\(*\)-representation \(\pi:C_0(X)\to \mathcal B(H_X)\).
We say that \(H_X\) is \emph{ample} if \(\pi(f)\) is non-compact for every nonzero \(f\in C_0(X)\).
When no confusion arises, we write \(fh\) for \(\pi(f)h\).

\medskip
\noindent\textbf{Support, propagation, and local compactness.}
Let \(T\in \mathcal B(H_X)\). The \emph{support} of \(T\), denoted \(\supp(T)\subseteq X\times X\), is defined as the complement of the set of
\((x,y)\in X\times X\) for which there exist \(f,g\in C_0(X)\) with \(f(x)\neq 0\) and \(g(y)\neq 0\) such that
\[
gTf=0.
\]
The \emph{propagation} of \(T\) is
\[
\prop(T)\ :=\ \sup\{\, d(x,y)\ |\ (x,y)\in \supp(T)\,\}\ \in [0,\infty],
\]
and \(T\) has \emph{finite propagation} if \(\prop(T)<\infty\).
We say that \(T\) is \emph{locally compact} if \(fT\) and \(Tf\) are compact operators for all \(f\in C_0(X)\).

\medskip
\noindent\textbf{Bounded geometry.}
Assume that $X$ is discrete. We say $X$ has \emph{bounded geometry} if for every \(R>0\) there exists \(N_R<\infty\) with
\(\sup_{x\in X}|B(x,R)|\le N_R\).

\begin{definition}[Roe algebra]\label{def:roe-algebra}
Let \(X\) be a proper metric space and let \(H_X\) be an (ample) \(X\)-module.
Let \(\mathbb C[X]\) denote the \(*\)-algebra of all locally compact finite-propagation operators on \(H_X\).
The \emph{(reduced) Roe algebra} of \(X\) is the operator-norm completion
\[
C^*(X)\ :=\ \overline{\mathbb C[X]}^{\ \|\cdot\|}\ \subseteq\ \mathcal B(H_X).
\]
\end{definition}

\begin{definition}[Localization algebra]\label{def:localization-algebra}
Let \(X\) and \(H_X\) be as above.
The \emph{localization algebra} \(C_L^*(X)\) is the operator-norm completion of the \(*\)-algebra of all
bounded, uniformly norm-continuous functions
\[
g:[0,\infty)\longrightarrow C^*(X)
\]
such that \(g(t)\) has finite propagation for each \(t\ge 0\) and
\[
\prop\bigl(g(t)\bigr)\longrightarrow 0 \qquad \text{as } t\to\infty.
\]
We equip this \(*\)-algebra with the norm
\[
\|g\|\ :=\ \sup_{t\ge 0}\|g(t)\|.
\]
Moreover, there is a canonical evaluation \(*\)-homomorphism at \(0\),
\[
\ev_0:\ C_L^*(X)\to C^*(X),\qquad \ev_0(g)=g(0).
\]
\end{definition}

For proper metric spaces, the $K$-theory of $C_L^*(X)$ models the $K$-homology
of $X$, and the evaluation map induces the (coarse) assembly map
\[
\mu\ :=\ (\ev_0)_*:\lim_{d\to\infty}\ K_*\bigl(C_L^*(P_d(X))\bigr)\ \longrightarrow\ K_*\!\bigl(C^*(X)\bigr).
\]

\begin{definition}[Coarse Baum--Connes and coarse Novikov conjectures]\label{def:cbc-cnc}
Let $X$ be a bounded geometry proper metric space.
\begin{itemize}
  \item The \emph{coarse Baum--Connes conjecture} (CBC) for $X$ asserts that the assembly map
  $\mu:\lim_{d\to\infty}K_*(C_L^*(P_d(X)))\to K_*(C^*(X))$ is an isomorphism.
  \item The \emph{coarse Novikov conjecture} (CNC) for $X$ asserts that $\mu$ is injective.
\end{itemize}
\end{definition}

We do not review the coarse-geometric background here; the reader may consult \cite{NY12,Roe03} for the relevant material.

\section{Relative fibred coarse embedding and relative higher index theory}
In this section, we extend the framework of \cite{GWZ25} to the setting of multiple target spaces, and at the same time generalize the sparse setting considered there to arbitrary bounded-geometry proper metric spaces.

\subsection{Relative fibred coarse embedding}
The following definition, originating from \cite{GWZ25}, generalizes the notion of fibred coarse embedding introduced in \cite{CWY13}. In \cite{GWZ25} an additional sparseness assumption is imposed on the underlying space.
We will show that this assumption is in fact redundant, and the same conclusions remain valid
in the general proper bounded-geometry setting.
\begin{definition}[Fibred coarse embedding relative to a subspace]\label{def:relative-fce}
Let $M$ be a metric space serving as a model space (for example, a Hilbert space, a Hadamard manifold, or an $\ell^p$-space with $1 \le p < \infty$).  
A metric space $X$ is said to \emph{admit a fibred coarse embedding into $M$ relative to a subspace $Y \subseteq X$} if there exist:
\begin{itemize}
    \item a field of model spaces $\{M_x\}_{x \in X}$, each isometric to $M$;
    \item a section $s: X \to \bigsqcup_{x \in X} M_x$ (that is, $s(x) \in M_x$ for every $x \in X$);
    \item two non-decreasing control functions $\rho_{-}, \rho_{+}: \mathbb{R}_{+} \to \mathbb{R}_{+}$ satisfying $\lim_{r \to \infty} \rho_{\pm}(r) = \infty$;
\end{itemize}
such that the following condition holds:

For every $R > 0$, there exists $S > 0$ with the property that for each point $x \in X \setminus \operatorname{Pen}(Y,S)$, one can find a \emph{local trivialization}
\[
t_{x,R}: \{M_z\}_{z \in B(x,R)} \longrightarrow B(x,R) \times M
\]
satisfying:
\begin{enumerate}
    \item[\textnormal{(1)}] \textbf{(Controlled distortion)}:  
    for all $z_1, z_2 \in B(x,R)$,
    \[
    \rho_{-}\!\big(d(z_1,z_2)\big)
    \;\leq\;
    d_M\!\big(t_{x,R}(z_1)(s(z_1)),\, t_{x,R}(z_2)(s(z_2))\big)
    \;\leq\;
    \rho_{+}\!\big(d(z_1,z_2)\big);
    \]
    \item[\textnormal{(2)}] \textbf{(Compatibility on overlaps)}:  
    for any $x,y \in X \setminus Y_S$ with $B(x,R) \cap B(y,R) \neq \varnothing$, there exists an isometry $t_{xy,R}: M \to M$ such that
    \[
    t_{x,R}(z) \circ t_{y,R}^{-1}(z) = t_{xy,R}, \quad \forall z \in B(x,R) \cap B(y,R).
    \]
\end{enumerate}
\end{definition}

\begin{remark}
If the distinguished subset $Y$ is bounded, then a fibred coarse embedding of $X$ into $M$ relative to $Y$ coincides with the usual notion of a fibred coarse embedding of $X$ into $M$.  
Hence, this relative version serves as a natural generalization of the classical concept.  
\end{remark}

Let $\{X_i\}_{i=1}^N$ be a finite collection of proper metric spaces, each endowed with a distinguished base point $o_i \in X_i$.  
Equip the product space $X = \prod_{i=1}^N X_i$ with the $\ell^2$-metric.

For every $M >0$ and $1 \leq i \leq N$, we define the \emph{$i$-th thickened face} of $X$ by
\[
F_M^{(i)} = 
\Big( \prod_{j \neq i} X_j \Big) \times B_{X_i}(o_i, M),
\]
where $B_{X_i}(o_i, M) = \{ x \in X_i \mid d_{X_i}(x, o_i) \le M \}$ denotes the closed $M$-neighbourhood of $o_i$ in $X_i$.

The \emph{$M$-boundary} of $X$ is then defined as the union of all these thickened faces:
\[
F_M = \bigcup_{i=1}^N F_M^{(i)}.
\]

Intuitively, $F_M$ consists of those points in $X$ whose $i$-th coordinate lies within distance $M$ from the chosen base point $o_i$ for at least one index $i$.

\begin{example}
Let $X_1=X_2=\mathbb{R}$ with the standard metric and the base points $o_1=o_2=0$. 
Consider the product space
\[
X=X_1\times X_2=\mathbb{R}^2
\]
with the $\ell^2$-product metric.  

For each $M\in \mathbb{N}$, the first face with thickness $M$ is
\[
F_M^{(1)}=B_{X_1}(0,M)\times X_2=[-M,M]\times \mathbb{R},
\]
and the second face with thickness $M$ is
\[
F_M^{(2)}=X_1\times B_{X_2}(0,M)=\mathbb{R}\times [-M,M].
\]

Hence the $M$-boundary of $X$ is
\[
F_M = F_M^{(1)}\cup F_M^{(2)}
= \big( [-M,M]\times \mathbb{R}\big)\;\cup\;\big(\mathbb{R}\times [-M,M]\big).
\]

Geometrically, $F_M$ consists of the vertical strip of width $2M$ around the $y$-axis together with the horizontal strip of width $2M$ around the $x$-axis. 
In particular, for $M=1$, $F_1$ is the union of a vertical strip of width $2$ and a horizontal strip of width $2$ crossing through the origin.
\end{example}

\begin{figure}[htbp]
  \centering
  \begin{tikzpicture}[scale=0.5]
    \fill[blue!40,opacity=0.6] (-1,-5) rectangle (1,5);
    \fill[green!40,opacity=0.6] (-5,-1) rectangle (5,1);

    \draw[->] (-5,0) -- (5.2,0) node[right] {$x$};
    \draw[->] (0,-5) -- (0,5.2) node[above] {$y$};

    \draw[step=1cm, help lines, dotted] (-5,-5) grid (5,5);

    \node at (0.6,4.2) {$[-1,1]\times\mathbb{R}$};
    \node at (4.0,0.6) {$\mathbb{R}\times[-1,1]$};
    \node at (2.5,2.5) {$F_1$};
  \end{tikzpicture}
  \caption{The 1-boundary $F_1$ in $\mathbb{R}^2$. }
\end{figure}

The family $\{F_M\}_{M>0}$ provides a coarse notion of ``boundary'' for the product space $X$,
we will see that if each factor admits a fibred coarse embedding,
then the product space inherits a relative version of this property with respect to its $1$-boundary.

\begin{proposition}\label{fcebdry}
Let \(N\ge 1\). For each \(i=1,\dots,N\), let \(X_i\) be a bounded-geometry proper metric space, and fix a base point \(o_i\in X_i\).
Assume that each \(X_i\) admits a fibred coarse embedding into a model target space \(B_i\),
where each \(B_i\) is either an \(\ell^{p_i}\)-space or a Hadamard space.
Let
\[
X=\prod_{i=1}^N X_i
\]
be equipped with the \(\ell^2\)-product metric, and let \(F_1\) denote its \(1\)-boundary.

Then \(X\) admits a fibred coarse embedding into the product target
\[
B:=\prod_{i=1}^N B_i
\]
relative to \(F_1\), where \(B\) is equipped with the \(\ell^2\)-product metric.
\end{proposition}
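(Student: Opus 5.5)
The plan is to build the relative fibred coarse embedding coordinatewise and to trace where the relative condition (away from $F_1$) enters.

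\textbf{Data.} For each $i$, fix a fibred coarse embedding of $X_i$ into $B_i$: fields $\{(B_i)_{x_i}\}_{x_i\in X_i}$, sections $s_i$, control functions $\rho_\pm^{(i)}$, and, for every $R$, local trivialisations $t^{(i)}_{x_i,R}$ on $B_{X_i}(x_i,R)$ with transition isometries $t^{(i)}_{x_iy_i,R}$. On $X$ take the product field $M_x:=\prod_i (B_i)_{x_i}$, isometric to $B$ with the $\ell^2$-metric, and the section $s(x):=(s_1(x_1),\dots,s_N(x_N))$.

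\textbf{Local trivialisations.} Since $B(x,R)\subseteq\prod_i B_{X_i}(x_i,R)$ for the $\ell^2$-metric, define $t_{x,R}(z):=\prod_i t^{(i)}_{x_i,R}(z_i)$ for $z\in B(x,R)$. This is a product of isometries, hence an isometry onto $B$.

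\textbf{Distortion.} Coordinatewise, $\rho_-^{(i)}(d(z_i,w_i))\le d_{B_i}(\cdot,\cdot)\le\rho_+^{(i)}(d(z_i,w_i))$. Taking $\ell^2$-sums and using $d(z_i,w_i)\le d(z,w)$, we get the upper bound with $\rho_+(r):=\sqrt N\max_i\rho_+^{(i)}(r)$. Since some coordinate satisfies $d(z_i,w_i)\ge d(z,w)/\sqrt N$, we get the lower bound with $\rho_-(r):=\min_i\rho_-^{(i)}(r/\sqrt N)$. Both are non-decreasing and tend to infinity.

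\textbf{Compatibility.} If $B(x,R)\cap B(y,R)\neq\varnothing$, then each $B_{X_i}(x_i,R)\cap B_{X_i}(y_i,R)\neq\varnothing$. So $t_{xy,R}:=\prod_i t^{(i)}_{x_iy_i,R}$ works on the overlap, being a product of isometries.

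\textbf{Where the relative condition enters.} Absolute fibred coarse embeddings would already give an absolute one for the product, so the real content is reconciling the paper's conventions, where the factors may be non-sparse and targets differ. The main obstacle is that for some target classes the transition maps or section may only be defined away from a bounded set. For example, in a Hadamard manifold setting, or where a chosen normalisation of the trivialisations near the base point $o_i$ forces exclusion of a bounded region $B_{X_i}(o_i,S_i)$.

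I would handle this by choosing, for each $R$, the constant $S:=\max_i S_i(R)+R+1$. Then any $x\notin\Pen(F_1,S)$ has every coordinate satisfying $d(x_i,o_i)>S$, so every ball $B_{X_i}(x_i,R)$ lies in the good region of every factor. Thus all the factor data above are available, and the relative definition with $Y=F_1$ is met.

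I would also check two routine points. First, that the product of Hadamard spaces and $\ell^p$-spaces with the $\ell^2$-metric is a legitimate model target for the later coarsely proper algebra. Second, that the isometry $t_{xy,R}$ is independent of $z$. Both follow factorwise.
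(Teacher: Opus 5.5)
Your proposal is correct and follows essentially the same route as the paper: product field and section, product trivialisations on $B(x,R)\subseteq\prod_i B_{X_i}(x_i,R)$, control functions $\sqrt N\,\rho_+$ and $\rho_-(\cdot/\sqrt N)$, coordinatewise transition isometries, and a choice of $S$ (the paper uses $F_M$ with $M=\max_i M_i$) that forces every coordinate of $x$ away from the factors' exceptional bounded sets. One correction to your commentary: factorwise fibred coarse embeddings do \emph{not} in general give an absolute one for the product, because each factor's trivialisations exist only outside a bounded set $K_i\subseteq X_i$ (this is part of the standard definition for every target class, not a Hadamard-specific normalisation), and $\prod_{j\neq i}X_j\times K_i$ is unbounded in $X$ --- that is exactly why the conclusion is relative to $F_1$, and your choice of $S$ is what handles it.
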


\begin{proof}[Sketch of proof]
For each \(i=1,\dots,N\), let \((B_{x_i})_{x_i\in X_i}\) be the field of target spaces over \(X_i\),
with each \(B_{x_i}\) isometric to \(B_i\), and let
\[
s_i:X_i\to \bigsqcup_{x_i\in X_i} B_{x_i}
\]
be the corresponding section. Assume all factorwise fibred coarse embeddings have the same control
functions \(\rho_\pm\).

Set
\[
B:=\prod_{i=1}^N B_i
\]
with the \(\ell^2\)-product metric. For \(x=(x_1,\dots,x_N)\in X=\prod_{i=1}^N X_i\), define
\[
\widetilde B_x:=\prod_{i=1}^N B_{x_i},
\qquad
s(x):=(s_1(x_1),\dots,s_N(x_N)).
\]

For each \(R>0\), choose \(M_i>0\) so that for every \(x_i\in X_i\setminus X_{i,M_i}\) there exists a
trivialization
\[
t_{x_i,R}:(B_z)_{z\in B(x_i,R)}\longrightarrow B(x_i,R)\times B_i.
\]
Let
\[
M:=\max\{M_i:1\le i\le N\}.
\]
Then \(F_M\) is a bounded neighbourhood of \(F_1\). For \(x=(x_1,\dots,x_N)\in X\setminus F_M\), define
the product trivialization
\[
t_{x,R}:=\prod_{i=1}^N t_{x_i,R}.
\]
Since
\[
B_X(x,R)\subseteq \prod_{i=1}^N B_{X_i}(x_i,R),
\]
this gives a trivialization over \(B_X(x,R)\).

Now for \(z,z'\in B_X(x,R)\), the \(\ell^2\)-product metric and the common control functions give
\[
d_B\bigl(t_{x,R}(z)(s(z)),\,t_{x,R}(z')(s(z'))\bigr)^2
=
\sum_{i=1}^N
d_{B_i}\bigl(t_{x_i,R}(z_i)(s_i(z_i)),\,t_{x_i,R}(z_i')(s_i(z_i'))\bigr)^2
\le
N\,\rho_+^2\bigl(d_X(z,z')\bigr),
\]
while for some \(i\) one has \(d_{X_i}(z_i,z_i')\ge d_X(z,z')/\sqrt N\), hence
\[
d_B\bigl(t_{x,R}(z)(s(z)),\,t_{x,R}(z')(s(z'))\bigr)
\ge
\rho_-\!\left(\frac{d_X(z,z')}{\sqrt N}\right).
\]
Thus the product embedding has control functions
\[
\widetilde\rho_-(t):=\rho_-\!\left(\frac{t}{\sqrt N}\right),
\qquad
\widetilde\rho_+(t):=\sqrt N\,\rho_+(t).
\]

Finally, the overlap condition is inherited coordinatewise, since the transition maps in each factor are
isometries of \(B_i\), and therefore their product is an isometry of \(B\).
Hence these product data define a fibred coarse embedding of \(X\) into \(B\) relative to \(F_1\).
\end{proof}

\begin{remark}\label{mono}
Suppose $F\subseteq F'\subseteq X$ are closed subsets. 
If $X$ admits a fibred coarse embedding into a Hilbert space relative to $F$, 
then it also admits a fibred coarse embedding (with the same control functions) relative to $F'$. 
Indeed, for every $R>0$ one has $X\setminus \Pen(F',R)\subseteq X\setminus \Pen(F,R)$, 
so the required local trivializations for $X\setminus \Pen(F,R)$ restrict to
$X\setminus \Pen(F',R)$. We refer to this phenomenon as the \em{monotonicity of the relative condition}.
\end{remark}

\subsection{Relative higher index theory}

In this subsection, we recall the relative Roe algebra and the associated
relative index map introduced in \cite{GWZ25}.
Let \(X\) be a proper metric space with bounded geometry, and let
\(Y\subseteq X\) be a closed subset.

For \(T\in C^*(X)\) and \(\varepsilon>0\), define the
\(\varepsilon\)-support of \(T\) by
\[
\supp_{\varepsilon}(T)
:=
\bigl\{(x,y)\in X\times X:\|T(x,y)\|\geq \varepsilon\bigr\}.
\]
For a subset \(E\subseteq X\times X\), let
\[
r(E):=\{x\in X:(x,y)\in E\text{ for some }y\in X\}
\]
denote its projection onto the first coordinate.

\begin{definition}\label{def:ghostly-ideal}
The \emph{ghostly ideal} of \(C^*(X)\) associated with \(Y\) is defined by
\[
I_G(Y)
:=
\left\{
T\in C^*(X):
\begin{array}{l}
\text{for every }\varepsilon>0,\text{ there exists }R>0\\
\text{such that }
r\bigl(\supp_{\varepsilon}(T)\bigr)
\subseteq \Pen(Y,R)
\end{array}
\right\}.
\]
\end{definition}

\begin{definition}\label{def:relative-roe}
The \emph{relative Roe algebra at infinity} of \(X\) with respect to \(Y\)
is defined by
\[
C^*_{\infty,Y}(X)
:=
C^*(X)\big/I_G(Y).
\]
Thus, there is a short exact sequence
\[
0
\longrightarrow I_G(Y)
\longrightarrow C^*(X)
\xrightarrow{\;\pi_Y\;}
C^*_{\infty,Y}(X)
\longrightarrow 0.
\]
\end{definition}

For \(T\in C^*_{\infty,Y}(X)\), its propagation is defined by
\[
\prop(T)
:=
\inf\bigl\{
\prop(S):
S\in C^*(X),\ \pi_Y(S)=T
\bigr\}.
\]

We next introduce the localization-algebra counterpart of the relative
Roe algebra. Define
\[
C^*_{L,Y}\bigl(X;I_G(Y)\bigr)
:=
\left\{
f\in C_L^*(X):
f(t)\in I_G(Y)\text{ for every }t\geq 0
\right\}.
\]
This is a closed ideal of \(C_L^*(X)\).

\begin{definition}\label{def:relative-localization}
Let \(X\) be a proper metric space with bounded geometry, and let
\(Y\subseteq X\) be a closed subset. The \emph{relative localization
algebra at infinity} of \(X\) with respect to \(Y\), denoted by
\(C^*_{L,\infty,Y}(X)\), is the \(C^*\)-algebra consisting of all bounded,
uniformly norm-continuous functions
\[
f:[0,\infty)\longrightarrow C^*_{\infty,Y}(X)
\]
such that \(f(t)\) has finite propagation for every \(t\geq 0\) and
\[
\prop\bigl(f(t)\bigr)\longrightarrow 0
\qquad\text{as }t\to\infty.
\]
It is equipped with the supremum norm
\[
\|f\|:=\sup_{t\geq 0}\|f(t)\|.
\]
\end{definition}

Evaluation at \(t=0\) defines a canonical \(*\)-homomorphism
\[
\ev_0^Y:
C^*_{L,\infty,Y}(X)
\longrightarrow
C^*_{\infty,Y}(X),
\qquad
\ev_0^Y(f)=f(0).
\]
Consequently, it induces the relative index map
\[
\Ind_{Y,\infty}
:=
(\ev_0^Y)_*:
K_*\bigl(C^*_{L,\infty,Y}(X)\bigr)
\longrightarrow
K_*\bigl(C^*_{\infty,Y}(X)\bigr).
\]

\begin{definition}\label{def:relative-index-map}
The homomorphism induced by evaluation at zero,
\[
\Ind_{Y,\infty}
:=
(\ev_0^Y)_*:
K_*\bigl(C^*_{L,\infty,Y}(X)\bigr)
\longrightarrow
K_*\bigl(C^*_{\infty,Y}(X)\bigr),
\]
is called the \emph{relative index map} of \(X\) with respect to \(Y\).
\end{definition}

Accordingly, we define the relative \(K\)-homology at infinity of the
pair \((X,Y)\) by
\[
K_*^\infty(X;Y)
:=
K_*\bigl(C^*_{L,\infty,Y}(X)\bigr).
\]
With this notation, the relative index map takes the form
\[
\Ind_{Y,\infty}:
K_*^\infty(X;Y)
\longrightarrow
K_*\bigl(C^*_{\infty,Y}(X)\bigr).
\]

Now let \(P_d(X)\) denote the Rips complex of \(X\) at scale \(d\).
For every \(d\geq 0\), evaluation at zero gives a relative index map
\[
\Ind_{Y,\infty}^{d}:
K_*\bigl(C^*_{L,\infty,Y}(P_d(X))\bigr)
\longrightarrow
K_*\bigl(C^*_{\infty,Y}(P_d(X))\bigr).
\]
Using the canonical coarse equivalence between \(P_d(X)\) and \(X\),
the target may be identified with
\[
K_*\bigl(C^*_{\infty,Y}(X)\bigr).
\]
Passing to the direct limit over \(d\), we obtain the relative coarse
assembly map
\[
\mu_{Y,\infty}:
\varinjlim_{d\to\infty}
K_*\bigl(C^*_{L,\infty,Y}(P_d(X))\bigr)
\longrightarrow
K_*\bigl(C^*_{\infty,Y}(X)\bigr).
\]

\begin{conjecture}[Relative coarse Baum--Connes and Novikov conjectures]
\label{conj:relative-cbc-cnc}
Let \(X\) be a proper metric space with bounded geometry and let
\(Y\subseteq X\) be a closed subset.
\begin{enumerate}
\item The \emph{relative coarse Baum--Connes conjecture} for \((X,Y)\)
asserts that
\[
\mu_{Y,\infty}
\]
is an isomorphism.

\item The \emph{relative coarse Novikov conjecture} for \((X,Y)\)
asserts that
\[
\mu_{Y,\infty}
\]
is injective.
\end{enumerate}
\end{conjecture}

Following the approach of \cite{GLWZ23,GWZ25}, we introduce the notion of coarsely proper coefficient
algebras and develop the twisted assembly map with coefficients in such algebras for general
bounded geometry metric spaces, not only for spaces that are sparse relative to a subset $Y$.
Within this broader framework, we establish the relative coarse Novikov conjecture in certain cases. 

As noted earlier, the relative index map considered here is precisely the \(k=1\) case of the previously defined iterated index map. Likewise, the relative Roe algebra, relative localization algebra, and relative \(K\)-homology appearing here are exactly the corresponding \(k=1\) instances of their iterated counterparts.

\begin{definition}\label{NCalg}
Let $\mathcal{A}$ be an $N$-$C^*$-algebra.  
We say that $\mathcal{A}$ is a \emph{coarsely proper algebra associated with the relative fibred coarse embedding} of $X$ into $M$ with respect to $Y$,  
if the following data exist:

\begin{enumerate}
  \item A field of $C^*$-algebras $\left(\mathcal{A}_x\right)_{x \in X}$ over $X$ such that each $\mathcal{A}_x$ is isomorphic to $\mathcal{A}$;

  \item For each $x \in X$, a family of open subsets $\{O_{x,R}\}_{R \ge 0}$ of $N_x$ forming an increasing nest whose union $\bigcup_{R \ge 0} O_{x,R}$ is dense in $N_x$,  
  where $\mathcal{A}_x$ is regarded as an $N_x$-$C^*$-algebra.
\end{enumerate}

Moreover, for every $k \in \mathbb{N}$ and $x \in X \setminus Y_k$,  
the trivialization associated with the fibred coarse embedding induces a corresponding trivialization of the field of $C^*$-algebras.  

\[
t_x : \bigcup_{z \in B(x, l_k)} \mathcal{A}_z \longrightarrow B(x, l_k) \times \mathcal{A}.
\]

This trivialization satisfies the following conditions:

\begin{enumerate}
 \item[(1)]
For each \(z\in B(x,R)\), the map
$t_{x,R}(z):\mathcal A_z\longrightarrow\mathcal A$
is a \(C^*\)-isomorphism inducing a homeomorphism
\[
f_{t_{x,R}(z)}:N_z\longrightarrow N.
\]
There exist non-decreasing proper functions
$\rho_-,\rho_+:\mathbb R_+\longrightarrow\mathbb R_+$
such that, for all \(z,z'\in B(x,R)\) and \(r>0\),
\[
f_{t_{x,R}(z)}(O_{z,\rho_-(r)})
\cap
f_{t_{x,R}(z')}(O_{z',\rho_-(r)})
=
\varnothing
\]
whenever
\[
d_M\!\left(
t_{x,R}(z)(s(z)),
t_{x,R}(z')(s(z'))
\right)
\ge 2r,
\]
and
\[
f_{t_{x,R}(z)}(O_{z,r})
\subseteq
f_{t_{x,R}(z')}
\left(
O_{z',\,r+\rho_+(d(z,z'))}
\right).
\]

\item[(2)]
For any \(x,x'\in X\setminus\Pen(Y,S)\) with
\[
B(x,R)\cap B(x',R)\neq\varnothing,
\]
there exists a \(C^*\)-isomorphism
$t_{xx',R}:\mathcal A\longrightarrow\mathcal A$
such that
\[
t_{x,R}(z)\circ t_{x',R}^{-1}(z)
=
t_{xx',R}
\]
for every
\[
z\in B(x,R)\cap B(x',R).
\]
\end{enumerate}
\end{definition}

\begin{example}\label{exa: coarsely proper algebra}
If $X$ admits a \emph{relative fibred coarse embedding} into an $\ell^{p}$-space $B$ with respect to a fixed subspace $Y$, 
then, according to~\cite{GLWZ24}, one can associate to it a \emph{coarsely proper algebra} 
$\mathcal{A}(B)$ corresponding to the relative fibred coarse embedding. 

Similarly, if $X$ admits a relative fibred coarse embedding into a Hadamard manifold $M$ 
with respect to a subspace $Y$, then the algebra
\[
\mathcal{A}(M):=C_{0}\!\bigl(M,\mathrm{Cliff}_{\mathbb{C}}(TM)\bigr)
\]
can likewise be shown to be a \emph{coarsely proper algebra} associated with the relative fibred coarse embedding.

More generally, if the target space is a finite product
\[
E=B^{(1)}\times\cdots\times B^{(m)}\times M,
\]
where each $B^{(j)}$ is an $\ell^{p_j}$-space and $M$ is a Hadamard manifold, then one associates to $E$ the graded tensor product algebra
\[
\mathcal A(E):=
\mathcal A(B^{(1)})\widehat\otimes\cdots\widehat\otimes \mathcal A(B^{(m)})
\widehat\otimes \mathcal A(M).
\]
This algebra is again a coarsely proper algebra associated with the corresponding relative fibred coarse embedding.

The full details are provided in \cite{GLWZ23} and are omitted here for brevity.
\end{example}

\begin{definition}  
Let $X$ be a proper metric space and let $Y \subseteq X$ be a closed subspace.  
The \emph{algebraic twisted Roe algebra at infinity relative to $Y$}, denoted by  
$\mathbb{C}_{\infty, Y}[X, \mathcal{A}]$,  
is defined as the set of equivalence classes $[T]$ of families of operators $T = (T(x,y))_{x,y \in X}$ satisfying the following conditions:  
\begin{enumerate}  
\item
There exists $r > 0$ such that $T(x,y) = 0$ whenever $d(x,y) > r$.  

\item 
There exists $R > 0$ such that  
$\operatorname{supp}_N(T(x,y)) \subseteq f_{t_x(x)}(O_{x,R})$  
for all $x,y \in X_n$,  
where $f_{t_x(x)}$ is defined as in Definition \ref{NCalg} and $T(x,y)$ is viewed as an operator on $\mathcal{H}_{P_d(X), \mathcal{A}}$.  
\end{enumerate}  

Two elements $T$ and $S$ are identified if  
\[
T \sim S  
\quad \Longleftrightarrow \quad  
\lim_{R \to \infty} \sup_{x,y \in X \setminus Y_R} \|T(x,y) - S(x,y)\| = 0,
\]
where $Y_R$ denotes the $R$–neighborhood of $Y$.  
\end{definition}

Following the standard construction of twisted Roe algebras (cf.~\cite{DGWY25,Yu00}),  
for each $C^*$-isomorphism $t_{xy}:\mathcal{A}\to\mathcal{A}$ we define the induced isometry on $\mathcal{H}_{P_d(X),\mathcal{A}}$ by  
\[
t_{xy}(\delta_z\otimes\delta_n\otimes a)=\delta_z\otimes\delta_n\otimes t_{xy}(a),
\]
and the corresponding $*$-automorphism 
$\mathrm{Ad}_{t_{xy}}(T)=t_{xy}Tt_{xy}^{-1}$ on $\mathcal{B}(\mathcal{H}_{P_d(X),\mathcal{A}})$.

The algebraic operations on $\mathbb{C}_{Y,\infty}[(P_d(X),\mathcal{A})]$ are then defined by
$$
(TS)(x,y)=\sum_{z\in X} T(x,z)\,\mathrm{Ad}_{t_{xz}}(S(z,y)),$$
$$T^*(x,y)=\mathrm{Ad}_{t_{xy}}(T(y,x)).$$

These formulas yield a well-defined $*$-algebra structure,  
since the support control relation 
$f_{t_{xy}}(f_{t_y(y)}(O_{y,R}))\subseteq f_{t_x(x)}(O_{x,R+\rho(d(x,y))})$
ensures closure under multiplication and adjoint (see~\cite{DGWY25}).

The representation of this algebra on 
$E=\bigoplus_{z\in X}\mathcal{H}_{P_d(X),\mathcal{A}}$  
is given componentwise by
\[
(T\tilde{\xi})_z^y=\sum_{x\in X}\mathrm{Ad}_{t_{zx}}(T(x,y))\,\tilde{\xi}_x^y,
\]
which fits the algebraic structure as in~\cite[Remark~3.5]{DGWY25}.For details of this construction, see~\cite{DGWY25,GWZ25}.

Finally, the ghostly ideal associated with $Y$ is defined by
\[
\mathcal{I}_G(Y,\mathcal{A})=
\{\,T\in C^*(P_d(X),\mathcal{A})\mid 
\lim_{k\to\infty}\sup_{x,y\in X\setminus Y_k}\|T(x,y)\|\le\varepsilon\,\}.
\]

\begin{definition}
The norm on  
$\mathbb{C}_{Y, \infty}\!\left[\left(P_d(X), \mathcal{A}\right)\right]$  
is defined by  
\[
\|[T]\|
=
\inf\bigl\{
\|T+S\|:
S\in\mathcal I_G(Y,\mathcal A)
\bigr\}.
\]
The completion of  
$\mathbb{C}_{Y, \infty}\!\left[\left(P_d(X), \mathcal{A}\right)\right]$  
under this norm is defined to be the twisted Roe algebra at infinity relative to $Y$, denoted by  
$C_{Y, \infty}^*\!\left(P_d(X), \mathcal{A}\right)$.
\end{definition}

\begin{definition} For each $d \geq 0$, the algebraic twisted localization algebra at infinity of $X$ relative to $Y$, denoted by $\mathbb{C}_{L, Y, \infty}\left[\left(P_d\left(X\right), \mathcal{A}\right)\right]$, consists of all bounded, uniformly continuous functions
$$
g: \mathbb{R}_{+} \rightarrow \mathbb{C}_{Y, \infty}\left[\left(P_d\left(X\right), \mathcal{A}\right)\right]
$$
such that $g(t)$ satisfies the following conditions
\begin{enumerate}
\item $\operatorname{prop}\left(g(t)\right) \rightarrow 0$ as $t \rightarrow \infty$;
\item there exists $R>0$ such that $\operatorname{supp}\left(g(t)\right)(x, y) \subseteq f_{t_x(x)}\left(O_{x, R}\right)$ for all $t \in \mathbb{R}_{+},$ and $x, y \in X$.
\end{enumerate}
The twisted localization algebra at infinity relative to $Y$ is defined to be the $C^*$-completion of $\mathbb{C}_{L, Y, \infty}[\left(P_d\left(X\right), \mathcal{A}\right]$ under the norm,
$$
\|g\|_{\text {red }}=\sup _{t \in \mathbb{R}_{+}}\|g(t)\|
$$
denoted by $C_{L, Y, \infty}^*\left(P_d\left(X\right), \mathcal{A}\right).$
\end{definition}

The evaluation maps $g \mapsto g(0)$ for both cases induces the following index maps
$$
\begin{gathered}
\operatorname{Ind}_{\mathcal{A}}: K_*\left(C_{L, Y, \infty}^*\left(P_d\left(X\right), \mathcal{A}\right)\right) \rightarrow K_*\left(C_{Y, \infty}^*\left(P_d\left(X\right), \mathcal{A}\right)\right).
\end{gathered}
$$

Passing $d$ to infinity, we then have the twisted relative assembly map at infinity.
$$
\mu_{\mathcal{A}}: \lim _{d \rightarrow \infty} K_*\left(C_{L, Y, \infty}^*\left(P_d\left(X\right), \mathcal{A}\right)\right) \rightarrow \lim _{d \rightarrow \infty} K_*\left(C_{Y, \infty}^*\left(P_d\left(X\right), \mathcal{A}\right)\right).
$$

We can, following the approach in \cite{GLWZ24,GWZ25}, use the cutting-and-pasting technique together with the Mayer–Vietoris sequence argument to prove the following theorem.
\begin{theorem}{\label{twist}}
If $\mathcal{A}$ is a coarsely proper algebra associated with the relative fibred coarse embedding of $X$ with respect to $Y$, then the twisted relative assembly map $\mu_{\mathcal{A}}$ is an isomorphism.
\end{theorem}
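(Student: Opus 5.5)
The plan follows the cutting-and-pasting strategy of \cite{GLWZ24,GWZ25}, carried out directly on the relative algebras at infinity and without using sparseness. Throughout, $\mu_{\mathcal A}$ is studied at the level of the directed systems over $d$.

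\textbf{Step 1: localizing the coefficients.} For each $R>0$ (and each $d$), let $C^*_{Y,\infty}(P_d(X),\mathcal A)_{O_R}$ be the closed two-sided ideal generated by the operators $T$ whose matrix entries satisfy
\[
\operatorname{supp}_N(T(x,y))\subseteq f_{t_x(x)}(O_{x,R}).
\]
Define the localization counterpart $C^*_{L,Y,\infty}(P_d(X),\mathcal A)_{O_R}$ in the same way. Since $\bigcup_R O_{x,R}$ is dense in $N_x$, both algebras are inductive limits of these ideals. Because $K$-theory commutes with inductive limits, it suffices to show that the evaluation map is an isomorphism on $K$-theory for each fixed $R$, in the limit $d\to\infty$.

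\textbf{Step 2: decomposing $O_R$.} Fix $r>0$. Choose a maximal $\rho_-(r)$-separated family of points in $X$, and use condition (1) of Definition~\ref{NCalg} (the disjointness clause) to decompose the open set underlying $O_R$ into finitely many pieces, each of which is a disjoint union of uniformly bounded open sets. The number of pieces is bounded in terms of the bounded geometry constant $N_{R'}$ for a suitable $R'$.

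The key point is that the condition ``$x\in X\setminus \Pen(Y,S)$'' in the definition of a relative fibred coarse embedding is harmless here. The algebras are quotients by $\mathcal I_G(Y,\mathcal A)$, so matrix entries over $\Pen(Y,S)$ vanish in the quotient for every finite $S$. Hence only points outside a penumbra are seen, and there the trivializations $t_{x,R}$ exist. Compatibility on overlaps (condition (2)) lets us glue the local trivializations into a global identification of the fields $\mathcal A_z$ over each bounded piece. This gluing replaces the role sparseness played in \cite{GWZ25}.

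\textbf{Step 3: Mayer--Vietoris.} The ideals corresponding to unions of pieces satisfy the usual excision identities, namely that sums and intersections of the ideals correspond to unions and intersections of the open sets. This gives compatible six-term Mayer--Vietoris sequences for the Roe side and the localization side, intertwined by $\ev_0$. By the five lemma and induction on the number of pieces, it suffices to treat a single piece: a uniformly bounded disjoint union $\bigsqcup_j O_j$, with each $O_j$ supported near a bounded family $X_j\subseteq X$ of uniformly bounded diameter.

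\textbf{Step 4: the uniformly bounded case.} For such a piece, the twisted algebras become, after passing to the quotient by the ghostly ideal, quotients of the form
\[
\prod_j \bigl(C^*(P_d(X_j))\otimes \mathcal A_{O_j}\bigr)\Big/ \bigoplus\text{-type ghost terms},
\]
together with the analogous localization algebras. Uniform boundedness and bounded geometry give a uniform family of compacts. For bounded spaces, the evaluation map $\ev_0$ is a $K$-isomorphism on $C^*_L(P_d(X_j))\to C^*(P_d(X_j))$, uniformly in $j$ once $d$ exceeds the diameters. Taking the product and the quotient is compatible with this, and the uniformity is what makes the product case go through. Hence $\ev_0$ is an isomorphism for each piece in the limit over $d$.

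\textbf{Main obstacle.} The hardest step is Step 3 in the quotient setting. I must check two things. First, the ideals defined via $O_R$ descend compatibly to the quotient by $\mathcal I_G(Y,\mathcal A)$. Second, the Mayer--Vietoris excision holds after quotienting, i.e.\ that intersections of the quotient ideals are the quotients of the intersections.

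I would handle this with a partition-of-unity argument in the coefficient algebra. The support functions are chosen uniformly in $x$ outside penumbras, so cutting an operator along them changes it only by an element of $\mathcal I_G$. This should yield the required approximate-unit (quasi-central) estimates uniformly in $x\in X\setminus \Pen(Y,S)$.
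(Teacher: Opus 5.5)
Your proposal follows the same cutting-and-pasting and Mayer--Vietoris argument that the paper invokes by reference (the paper gives no further detail): filter by coefficient support, colour the points outside a penumbra of $Y$ using bounded geometry and the disjointness clause, and reduce via the five lemma to uniform products of uniformly bounded pieces modulo the ghostly part, where $\ev_0$ is a $K$-isomorphism. The step you flag as the main obstacle is essentially formal, since for closed ideals $J_1,J_2,I$ of a $C^*$-algebra one has $(J_1+I)\cap(J_2+I)=(J_1\cap J_2)+I$, so excision passes to the quotient by $\mathcal{I}_G(Y,\mathcal{A})$ automatically.
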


\subsection{Almost flat Bott generators.} 
In this subsection, we provide a conceptual description of almost flat Bott generators for coarsely proper algebras.
\begin{definition}[\cite{GLWZ23,GWZ25}]{\label{afbg}}
 A coarsely proper algebra $\mathcal{A}$ is said to admit a family of uniformly almost flat Bott generators if,
\begin{enumerate}
\item for any $x \in X$ and $\tau \geq 1$, there exists a generator $\left[b_{x, \tau}\right] \in K_*\left(\mathcal{A}_x\right)$ such that for any $\varepsilon>0$ and $r \geq 0$, there exists $T>1$ such that
$$
\left\|t_x(x)\left(b_{x, \tau}\right)-t_x(y)\left(b_{y, \tau}\right)\right\| \leq \varepsilon,
$$
for all $x, y$ with $d(x, y) \leq r$ and $x, y$ in the same trivialization, and $\tau \geq T$, where $\mathcal{A}_x$ and $t_x$ is defined as in Definition \ref{NCalg};
\item for any $\tau \geq 1$, there exists $R>0$ such that $\operatorname{supp}\left(b_{x, \tau}\right) \subseteq O_{x, R}$ for any $x \in X$, where $O_{x, R}$ is defined as in Definition \ref{NCalg}.
\end{enumerate}
\end{definition}

\begin{example}[$\ell^p$-spaces and Hadamard manifolds]\label{ex:lp-hadamard-bott}
We record the basic Bott homomorphisms for the two types of target spaces that will appear in the product construction.

\smallskip
\noindent{\bf (1) The $\ell^p$-case.}
Let $B_x=\ell^p(\mathbb N,\mathbb R)$ for each $x\in X$, and let
\[
H=\ell^2(\mathbb N,\mathbb R).
\]
For any $\tau\in \mathbb R_+$ and $x\in X$, define a graded $*$-homomorphism
\[
\beta_{x,\tau}^{(p)}:\mathcal S\longrightarrow \mathcal A(B_x)\widehat\otimes \mathcal K
\]
by
\[
\beta_{x,\tau}^{(p)}(f)=f_\tau(C_{s(x)})\otimes p,
\]
where $p$ is a rank-one projection in $\mathcal K$, $f_\tau(r)=f(r/\tau)$, and
\[
C:B_x\longrightarrow \Cliff_{\mathbb C}(H)
\]
is the Clifford generator introduced in \cite[Section 3]{GLWZ24}, defined via the $\frac p2$-H\"older extension of the Mazur map.

By \cite[Remark 2.9.13]{WY20}, each homomorphism $\beta_{x,\tau}^{(p)}$ determines a class
\[
[\beta_{x,\tau}^{(p)}]\in K_1\bigl(\mathcal A(B_x)\bigr).
\]
Moreover, by \cite[Theorem 3.15]{GLWZ24}, for every $x\in X$ and every $\tau\in \mathbb R_+$, the class$[\beta_{x,\tau}^{(p)}]$
is a generator of $K_1(\mathcal A(B_x))$ for each $x\in X$ and $\tau\in\R_+$.
Furthermore, the family
$\bigl\{[\beta_{x,\tau}^{(p)}]\bigr\}_{x\in X,\ \tau\ge 1}$
forms a family of uniformly almost flat Bott generators in the sense of Definition \ref{afbg}.
Hence it induces a Bott map on $K$-theory.
\[
\beta_*:
K_1\!\left(C_{Y,\infty}^*\!\bigl((P_d(X))\bigr)\right)
\longrightarrow
K_0\!\left(C_{Y,\infty}^*\!\bigl((P_d(X),\mathcal{A}(B_x))\bigr)\right),
\]

\smallskip
\noindent{\bf (2) The Hadamard case.}
Let $M$ be a Hadamard manifold. For each $(x,t)\in M\times \mathbb R_+$, write
\[
t\mathbb R=
\begin{cases}
\mathbb R, & t\neq 0,\\
\{0\}, & t=0.
\end{cases}
\]
Define
\[
\Pi(M):=\prod_{(x,t)\in M\times \mathbb R_+}\Cliff_{\mathbb C}(T_xM\oplus t\mathbb R),
\]
and
\[
\Pi_b(M):=\Bigl\{\sigma\in \Pi(M):\sup_{(x,t)\in M\times \mathbb R_+}\|\sigma(x,t)\|<\infty\Bigr\}.
\]
For $x_0\in M$, define the Clifford generator
\[
C_{x_0}(x,t):=\bigl(-\log_x(x_0),t\bigr)\in T_xM\oplus t\mathbb R
\subseteq \Cliff_{\mathbb C}(T_xM\oplus t\mathbb R).
\]

Write $\mathcal S=C_0(\mathbb R)$ and decompose it into even and odd parts:
$C_0(\mathbb R)_{\mathrm{ev}}$ and $C_0(\mathbb R)_{\mathrm{odd}}$.
The functional calculus for $\Cliff_{\mathbb C}(T_xM\oplus t\mathbb R)$ is defined as follows.
For $f\in C_0(\mathbb R)_{\mathrm{ev}}$ and $v\in T_xM\oplus t\mathbb R\subseteq \Cliff_{\mathbb C}(T_xM\oplus t\mathbb R)$, set
\[
f(v)=f(\|v\|)\in \mathbb C\subseteq \Cliff_{\mathbb C}(T_xM\oplus t\mathbb R).
\]
For $g\in C_0(\mathbb R)_{\mathrm{odd}}$, define
\[
g(v)=
\begin{cases}
0, & v=0,\\[0.5em]
g(\|v\|)\dfrac{v}{\|v\|}, & v\neq 0.
\end{cases}
\]
Using this functional calculus, one defines a graded $*$-homomorphism
\[
\beta^M_{x_0}:\mathcal S\longrightarrow \Pi_b(M)
\]
by
\[
(\beta^M_{x_0}(h))(x,t)=h(C_{x_0}(x,t)),
\qquad h\in \mathcal S.
\]
Let $\mathcal A(M)$ denote the $C^*$-subalgebra of $\Pi_b(M)$ generated by $\{\beta^M_{x_0}(h):x_0\in M,\ h\in \mathcal S\}$.

By \cite{GWY21}, for each $x\in M$, the Bott homomorphism $\beta^M_x$ determines a generator of $K_1(\mathcal A(M))$ for each $x\in X$.
Moreover, by Sections 5 and 7 of \cite{GWY21}, these Bott homomorphisms induce the corresponding Bott map on $K$-theory,
\[
\beta_*:
K_1\!\left(C_{Y,\infty}^*\!\bigl((P_d(X))\bigr)\right)
\longrightarrow
K_0\!\left(C_{Y,\infty}^*\!\bigl((P_d(X),\mathcal{A}(M))\bigr)\right),
\]

\smallskip
\noindent{\bf (3) The Product case.}
Let $E=B^{(1)}\times\cdots\times B^{(m)}\times M$, where each $B^{(j)}=\ell^{p_j}(\mathbb N,\mathbb R)$ and $M$ is a Hadamard manifold.
We define the associated proper algebra by
\[
\mathcal A(E):=
\mathcal A(B^{(1)})\otimes\cdots\otimes \mathcal A(B^{(m)})
\otimes \mathcal A(M).
\]

For each $x\in X$ and $\tau\in \mathbb R_+$, let $[b^{(j)}_{x,\tau}]\in K_1\bigl(\mathcal A(B^{(j)})\bigr), j=1,\dots,m$
be the Bott generators associated to the $\ell^{p_j}$-factors, and let
$[b^M_x]\in K_1\bigl(\mathcal A(M)\bigr)$
be the Bott generator associated to the Hadamard factor.
We then define the Bott generator for the product target $E$ by
\[
[b^E_{x,\tau}]
:=
[b^{(1)}_{x,\tau}]\otimes\cdots\otimes [b^{(m)}_{x,\tau}]
\otimes [b^M_x]
\in K_{m+1}\bigl(\mathcal A(E)\bigr).
\]
Equivalently, at the level of Bott homomorphisms, one sets
\[
\beta^E_{x,\tau}
:=
\beta^{(1)}_{x,\tau}\otimes\cdots\otimes \beta^{(m)}_{x,\tau}
\otimes \beta^M_x.
\]
By K\"unneth formula, $\beta^E_{x,\tau}$ is indeed a generator of $K_{m+1}\bigl(\mathcal A(E)\bigr)$, see \cite{Bla98}. It is straightforward to check that $\beta^E_{x,\tau}$ is a family of almost flat Bott generators.
Consequently, the family
$\{[b^E_{x,\tau}]\}_{x\in X,\ \tau\ge 1}$
induces the corresponding Bott map on $K$-theory
\[
\beta_*:
K_*\!\left(C_{Y,\infty}^*\!\bigl((P_d(X))\bigr)\right)
\longrightarrow
K_{*+m+1}\!\left(C_{Y,\infty}^*\!\bigl((P_d(X),\mathcal{A}(E))\bigr)\right),
\]
see \cite[Section 4]{SW07} for $K_0$-case and \cite[Section 7]{GLWZ24} for $K_1$-case.
\end{example}

Moreover, one can define the Bott map for localization algebras
\[
(\beta_L) :
K_*\left(C_{L,Y,\infty}^*\big(P_d(X)\big)\right)
\longrightarrow
K_{*+m+1}\left(C_{L,Y,\infty}^*\big(P_d(X),\mathcal{A}(E)\big)\right).
\]
For both constructions above, there exists the following commutative diagram:
\[
\begin{gathered}
K_*\!\left(C_{L,Y,\infty}^*\!\big(P_d(X)\big)\right)
\xrightarrow{\,(ev_{Y,\infty})_*\,}
K_*\!\left(C_{Y,\infty}^*\!\big(P_d(X)\big)\right)
\\
\downarrow\, {(\beta_L)}_* \qquad\qquad\qquad\qquad\qquad \downarrow\, \beta_* \\
K_*\!\left(C_{L,Y,\infty}^*\!\big(P_d(X),\mathcal{A}(E)\big)\right)
\xrightarrow{\,(ev_{Y,\infty})_*\,}
K_*\!\left(C_{Y,\infty}^*\!\big(P_d(X),\mathcal{A}(E)\big)\right).
\end{gathered}
\]

By using a cutting-and-pasting argument as in \cite{Yu00}, we can deduce the following lemma.
\begin{lemma}{\label{twist-eq}}
The Bott map for localization algebras 
$$\beta_L :
K_*\!\left(C_{L,Y,\infty}^*\!\big(P_d(X)\big)\right)
\longrightarrow
K_*\!\left(C_{L,Y,\infty}^*\!\big(P_d(X),\mathcal{A}(E)\big)\right)$$
is an isomorphism.\qed
\end{lemma}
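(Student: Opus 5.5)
We show that $\beta_L$ is an isomorphism by a local-to-global argument. The relative localization algebras at infinity satisfy a Mayer--Vietoris principle, and over uniformly bounded pieces the Bott map reduces to the Bott periodicity isomorphism for the coefficient algebra $\mathcal A(E)$.

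\textbf{Step 1: localize the problem.} Since $\prop(g(t))\to 0$, both $C^*_{L,Y,\infty}(P_d(X))$ and $C^*_{L,Y,\infty}(P_d(X),\mathcal A(E))$ are insensitive to finite-propagation perturbations. So $K_*$ of either algebra can be computed from its restrictions to closed subcomplexes $P_d(X)_C$, for $C\subseteq X$. Moreover, modulo the ghostly ideal we may discard everything supported in $\Pen(Y,S)$ for any $S$.

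\textbf{Step 2: cover the complement of $\Pen(Y,S)$.} Fix $R$ much larger than $d$, and choose $S$ according to Definition~\ref{def:relative-fce}. By bounded geometry, the complement $X\setminus\Pen(Y,S)$ admits a cover by finitely many families $\{B(x_k,R)\}$, each family being $R$-separated, so that all balls in a family are mutually far apart. On each ball the local trivialization $t_{x_k,R}$ is available.

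\textbf{Step 3: Mayer--Vietoris.} The evaluation-compatible Mayer--Vietoris sequence for localization algebras (as in \cite{Yu00}) applies to both the untwisted and the twisted relative localization algebras at infinity. The Bott map $\beta_L$ is compatible with restriction to these pieces and with the boundary maps, because it is implemented fibrewise by the almost flat generators $b^E_{x,\tau}$ with $\tau\to\infty$ along the localization parameter. By the five lemma and induction on the number of families, it therefore suffices to treat a single separated family and its finite intersections. Each such object is a uniformly bounded, $R$-separated union $\bigsqcup_k C_k$ lying outside $\Pen(Y,S)$.

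\textbf{Step 4: the uniformly bounded case.} For such a union, the trivializations identify the twisted algebra with a product over $k$ of $C^*_L(P_d(C_k))\otimes\mathcal A(E)$, taken modulo $\bigoplus_k$, which reflects the quotient at infinity relative to $Y$. The Bott map then becomes the product over $k$ of the maps $\cdot\otimes[b^E_{x_k,\tau}]$. Each $C_k$ is uniformly bounded, and by the almost flatness in Definition~\ref{afbg} the Bott elements over $C_k$ may be replaced by the constant element $b^E_{x_k,\tau}$. By Example~\ref{ex:lp-hadamard-bott}(3) and the Künneth formula, this element generates $K_{m+1}(\mathcal A(E))\cong\mathbb Z$. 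Hence each factor is the Bott periodicity isomorphism $K_*(C^*_L(P_d(C_k)))\to K_{*+m+1}(C^*_L(P_d(C_k))\otimes\mathcal A(E))$.

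\textbf{Main obstacle.} The difficulty is uniformity in $k$. To pass to the product modulo sums, we need inverse maps with uniform control: a uniform strong Lipschitz homotopy contracting each $P_d(C_k)$, and a uniform Dirac-type inverse to Bott on each $\mathcal A(E)$-fibre. I would handle this as in \cite{Yu00,GWZ25}. The product/quotient $K$-theory is computed via uniformly controlled homotopies, and the Dirac elements for the $\ell^p$ factors (\cite{GLWZ24}) and the Hadamard factor (\cite{GWY21}) are combined by tensor product. Uniformity then follows from bounded geometry and the uniform control functions $\rho_\pm$.
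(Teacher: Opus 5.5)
Your overall strategy is the one the paper has in mind; its only justification for this lemma is ``a cutting-and-pasting argument as in \cite{Yu00}''. The problem is the base case: as you set it up, it rests on a claim that is false. After your Mayer--Vietoris step the pieces are $P_d(C_k)$, where $C_k$ is a ball $B(x_k,R)$ with $R\gg d$ or a finite intersection of such balls. To get control uniform in $k$ you then invoke a uniform strong Lipschitz homotopy contracting each $P_d(C_k)$. Neither bounded geometry nor $\rho_\pm$ provides this, because at a fixed scale $d$ the complex $P_d(C_k)$ can have nontrivial topology.

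For example, let $X$ be the vertex set of the graph obtained from the ray $\mathbb N$ by attaching a cycle of length $L$ at every integer, with the path metric. This space has bounded geometry and is quasi-isometric to $\mathbb N$, so it is coarsely embeddable. Suppose $3d<L\le 2R$. Collapsing everything off the cycle at $x_k$ to $x_k$ is a $1$-Lipschitz retraction of $B(x_k,R)$ onto that cycle. Hence $P_d(B(x_k,R))$ retracts onto the Rips complex of an $L$-cycle at scale $d$, which is homotopy equivalent to a circle. Intersections of balls behave no better.

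For a single $k$, the map $\cdot\otimes[b^E_{x_k,\tau}]$ is still an isomorphism by the K\"unneth theorem, but that is not what you need. You need the isomorphism for the uniform product over $k$, with the uniform support condition in the sets $O_{x,R'}$. $K$-theory does not commute with such products for free, and that uniformity is exactly what the contraction was supposed to supply.

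Yu's cutting-and-pasting avoids this by running the Mayer--Vietoris induction over the skeleta of $P_d(X)$, which is finite dimensional by bounded geometry, instead of over a cover by balls. Passing from the $(n-1)$-skeleton to the $n$-skeleton, one glues in a uniformly separated family of shrunken $n$-simplices. Each of these is strongly Lipschitz contractible to its barycentre with constants independent of the simplex. They are glued along families of simplex boundaries, which have lower dimension and are covered by the induction hypothesis. The base case is then a uniformly discrete set of points. There the $K$-theory of the two localization algebras is computed by uniform products of $\mathcal K$ and of $\mathcal A(E)\widehat\otimes\mathcal K$ with uniformly controlled supports, and the tensor product of the Dirac elements for the $\ell^{p_j}$ and Hadamard factors gives inverses to Bott that are uniform in the point. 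You should replace your Steps 2--4 by this argument, carried out over the part of $P_d(X)$ outside $\Pen(Y,S)$ where the trivializations exist.

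A smaller inaccuracy: in Step 4 the ideal you quotient by is not $\bigoplus_k$. When $Y$ is unbounded, as for the faces $F^{(i)}$, infinitely many $C_k$ lie in $\Pen(Y,S')$ for $S'>S$. The correct ideal consists of the families with $\|T_k\|\to 0$ as $d(C_k,Y)\to\infty$. This ideal is an inductive limit of uniform subproducts, so the five lemma still applies, but it should be stated that way.
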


By Theorem \ref{twist} and Lemma \ref{twist-eq}, and the following commutative diagram
\begin{center}
\begin{tikzcd}
K_{*}(C_{L,Y,\infty}^{*}(P_{d}(X))) \arrow[r, "(\beta_{L})_{*}"] \arrow[d, "(ev_{Y,\infty})_*"'] & 
K_{*+m+1}(C_{L,Y,\infty}^{*}(P_{d}(X),\mathcal{A}(E)))\arrow[d, "(ev_{Y,\infty}^{\mathcal{A}})_{*}"'] \\
K_{*}(C_{Y,\infty}^{*}(P_{d}(X))) \arrow[r, "\beta_{*}"] & 
K_{*+m+1}(C_{Y,\infty}^{*}(P_{d}(X),\mathcal{A}(E))),
\end{tikzcd}
\end{center}
we obtain the following consequence.

\begin{theorem}\label{thm: RCNC for RFCE}
Let $(X,d)$ be a metric space and let $Y\subseteq X$ be closed. 
If $X$ admits a fibred coarse embedding into either an $\ell^p$-space (with $1\le p<\infty$) or a Hadamard manifold \emph{relative to $Y$}, then the relative coarse Novikov conjecture holds for the pair $(X,Y)$.\qed
\end{theorem}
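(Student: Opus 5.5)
The argument is the standard Bott-map diagram chase, applied at each Rips scale \(d\) and then passed to the direct limit. Let \(\mathcal A=\mathcal A(E)\), where \(E\) is the target of the relative fibred coarse embedding. Concretely, \(E=B\) is an \(\ell^p\)-space, or \(E=M\) is a Hadamard manifold, and in both cases \(m+1\) is the resulting degree shift. By Example~\ref{exa: coarsely proper algebra}, \(\mathcal A\) is a coarsely proper algebra associated with the relative fibred coarse embedding of \(X\) into \(E\) with respect to \(Y\). By Example~\ref{ex:lp-hadamard-bott}, it carries a family of uniformly almost flat Bott generators. These generators give Bott maps \(\beta_*\) on the relative Roe algebras at infinity and \((\beta_L)_*\) on the relative localization algebras. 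By construction, the square with the evaluation maps \((ev_{Y,\infty})_*\) and \((ev^{\mathcal A}_{Y,\infty})_*\) commutes. I will take this commutativity at the level of \(K\)-theory for granted. It holds because both Bott maps are built from the same Bott homomorphisms applied pointwise, and evaluation at \(0\) is compatible with that construction.

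The key steps, in order, are as follows.

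\textbf{Step 1.} Fix \(d\). Apply the square to the Rips complex \(P_d(X)\), and pass to \(\varinjlim_d\). The Bott maps are compatible with the inclusions \(P_d(X)\subseteq P_{d'}(X)\), since these inclusions act only on the base and the Bott generators live over points of \(X\). Hence we obtain a commutative square whose top arrow is \(\varinjlim_d(\beta_L)_*\), whose left arrow is \(\mu_{Y,\infty}\), and whose right arrow is the twisted relative assembly map \(\mu_{\mathcal A}\).

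\textbf{Step 2.} By Lemma~\ref{twist-eq}, \((\beta_L)_*\) is an isomorphism for every \(d\), so its direct limit is an isomorphism as well.

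\textbf{Step 3.} By Theorem~\ref{twist}, \(\mu_{\mathcal A}\) is an isomorphism, and in particular it is injective.

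\textbf{Step 4.} Commutativity gives \(\mu_{\mathcal A}\circ\varinjlim(\beta_L)_*=\varinjlim\beta_*\circ\mu_{Y,\infty}\). The left-hand side is injective, so \(\mu_{Y,\infty}\) is injective. This is exactly the relative coarse Novikov conjecture for \((X,Y)\) in the sense of Conjecture~\ref{conj:relative-cbc-cnc}. Notably, we never need \(\beta_*\) itself to be injective.

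The main obstacle is the compatibility of the whole picture with the quotient by the ghostly ideals. The Bott map \(\beta_*\) must be well defined on \(K_*(C^*_{\infty,Y}(P_d(X)))\), which means it must send \(I_G(Y)\) into \(\mathcal I_G(Y,\mathcal A)\), up to the asymptotic almost-homomorphism involved. This is needed for the square to make sense at the level of quotients.

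I would verify this first on the algebraic level. The Bott construction multiplies the matrix entries \(T(x,y)\) by \(t_x(x)(b_{x,\tau})\), and these have norm bounded uniformly in \(x\). So if \(\|T(x,y)\|\) is small outside \(Y_k\), then the twisted entries are also small outside \(Y_k\). The almost-flatness condition in Definition~\ref{afbg} supplies the required estimates on commutators, and it is only needed outside a bounded neighbourhood \(\Pen(Y,S)\), where the trivializations exist.

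The same check applies to the localization version, since the propagation control \(\prop(g(t))\to 0\) is preserved. Once this is in place, Steps 1 through 4 are formal.
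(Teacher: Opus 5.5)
Your proposal is correct and follows the paper's argument. The paper also combines the commutative Bott square with Lemma~\ref{twist-eq} (the localization Bott map $(\beta_L)_*$ is an isomorphism) and Theorem~\ref{twist} (the twisted relative assembly map $\mu_{\mathcal A}$ is an isomorphism), so that injectivity of $\mu_{\mathcal A}\circ\varinjlim(\beta_L)_*=\varinjlim\beta_*\circ\mu_{Y,\infty}$ forces $\mu_{Y,\infty}$ to be injective. Your extra check that the Bott maps descend to the quotients by the ghostly ideals covers a point the paper takes for granted from the literature.
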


Combining \Cref{fcebdry} and \Cref{thm: RCNC for RFCE}, we deduce the following corollary.

\begin{corollary}\label{cor:relative-cn-product-fce}
Let $X=X_1\times\cdots\times X_n$ be a product metric space.
Assume that for each $i=1,\dots,n$, the space $X_i$ admits a fibred coarse embedding relative to $Y_i$
into either an $\ell^{p_i}$-space, with $1\le p_i<\infty$, or a Hadamard manifold.
Then the relative coarse Novikov conjecture holds for the pair
$(X,F_1)$, where $F_1$ is the $1$-boundary of $X$.\qed
\end{corollary}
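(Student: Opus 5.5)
The plan is to obtain the corollary by chaining \Cref{fcebdry}, the monotonicity of \Cref{mono}, and \Cref{thm: RCNC for RFCE}. The statement allows each factor's embedding to be relative to some $Y_i\subseteq X_i$. I first treat the case where $Y_i$ is bounded, or is contained in a bounded neighbourhood of the base point $o_i$. In that case a fibred coarse embedding of $X_i$ relative to $Y_i$ is, away from $\Pen(Y_i,S)$, just a family of local trivialisations over points far from $o_i$. This is exactly the input used in the proof of \Cref{fcebdry}: there one only needs trivialisations for $x_i\in X_i\setminus X_{i,M_i}$, where $X_{i,M_i}$ is a bounded ball around $o_i$. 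So I run the proof of \Cref{fcebdry} verbatim, taking $M_i$ large enough that $\Pen(Y_i,S_i)\subseteq B_{X_i}(o_i,M_i)$. This gives a fibred coarse embedding of $X$ into $B=\prod_i B_i$ relative to $F_M$ for some $M$. Since $F_M$ and $F_1$ are at finite Hausdorff distance, the penumbrae $\Pen(F_M,S)$ and $\Pen(F_1,S+M)$ are comparable. Hence the same data form a fibred coarse embedding relative to $F_1$, after enlarging $S$ in the definition.

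In the general case, with $Y_i$ arbitrary, I take the relative subset in $X$ to be
\[
F:=F_1\cup\bigcup_{i=1}^n \Big(\prod_{j\neq i}X_j\Big)\times Y_i .
\]
The product construction gives a relative fibred coarse embedding with respect to $F$. If the intended statement really concerns $F_1$ alone, I would note that one must read the hypotheses as $Y_i$ being bounded, or contained in a bounded set around $o_i$; otherwise the faces $X\times Y_i$ are not contained in any penumbra of $F_1$. I would flag this interpretation explicitly, and then \Cref{mono} handles any enlargement of the relative subset.

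Next I must check that the target fits \Cref{thm: RCNC for RFCE}. The product $B$ is of the form $E=B^{(1)}\times\cdots\times B^{(m)}\times M$, with the $\ell^{p}$ factors listed first and the Hadamard factors grouped together. A finite product of Hadamard manifolds with the $\ell^2$-metric is again a Hadamard manifold, so the grouped factor is a single Hadamard manifold. \Cref{exa: coarsely proper algebra} provides the coarsely proper algebra $\mathcal A(E)$. Example \ref{ex:lp-hadamard-bott}(3) provides the almost flat Bott generators, and with them the Bott maps $\beta_*$ and $(\beta_L)_*$. The commutative square preceding \Cref{thm: RCNC for RFCE}, together with \Cref{twist} and \Cref{twist-eq}, then shows that the relative assembly map for $(X,F_1)$ is injective after passing to $d\to\infty$. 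This is the argument of \Cref{thm: RCNC for RFCE}, applied with this mixed product target rather than a single $\ell^p$-space or Hadamard manifold.

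The main obstacle is this last step. \Cref{thm: RCNC for RFCE} is stated for a single $\ell^p$ or Hadamard target, whereas the product target is mixed. I would handle this by observing that the proof of that theorem uses only two ingredients. The first is the existence of a coarsely proper algebra together with \Cref{twist}. The second is a family of uniformly almost flat Bott generators together with \Cref{twist-eq}. Both are supplied for product targets by \Cref{exa: coarsely proper algebra} and Example \ref{ex:lp-hadamard-bott}(3). The degree shift $*\mapsto *+m+1$ is harmless, since injectivity of $(\ev_{Y,\infty})_*$ only requires $(\beta_L)_*$ to be an isomorphism and the twisted evaluation map to be an isomorphism.
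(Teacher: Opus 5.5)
Your proposal is correct and follows the paper's route: the corollary is obtained by combining \Cref{fcebdry} (the factorwise embeddings assemble into a fibred coarse embedding of $X$ into $\prod_i B_i$ relative to $F_1$) with \Cref{thm: RCNC for RFCE}, which, as you note, must be applied to the mixed product target using $\mathcal A(E)$ and the product Bott generators of Example~\ref{ex:lp-hadamard-bott}(3). Your reading of the hypothesis as ordinary fibred coarse embeddings (bounded $Y_i$) is also what the paper implicitly uses, since \Cref{fcebdry} assumes exactly this; for unbounded $Y_i$ the same construction only gives the relative statement for the larger set $F_1\cup\bigcup_i\bigl(\prod_{j\neq i}X_j\bigr)\times Y_i$.
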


\section{Iterated higher index theory at infinity}

In this section, we shall deduce the global coarse Novikov conjecture for $X$ from the relative statement for $(X, F)$, where the boundary $F = \bigcup_{i=1}^N X_i$ is a finite union of thickened faces. 

\subsection{Iterated higher index theory}
In previous approaches (in \cite{GWZ25}), the standard method proceeds by reducing the coarse Novikov conjecture for the pair $(X, F)$ to the coarse Novikov conjecture for the boundary $F$ itself. However, this argument strictly relies on the \emph{sparseness} of the spaces involved. If the product space contains any non-sparse factors, this standard reduction method may fail.

To bypass this obstacle, we develop a new reduction strategy. Rather than dealing with the entire boundary $F$ at once, our strategy is to decompose $F$ into its constituent faces $X_1, \dots, X_N$ and quotient them out \emph{one at a time}. This multi-stage reduction process naturally gives rise to a sequence of new algebras. To precisely describe this step-by-step reduction process, we introduce a sequence of new algebras. Because the reduction procedure itself is carried out iteratively, we call these new algebras the \emph{iterated relative Roe algebras} (and correspondingly, \emph{iterated relative localization algebras}). 

Recall that the ghostly ideal supported near a closed set $Y$ is
\[
I_G(Y)\;:=\;\Bigl\{\,T\in C^*(X)\ \Big|\ \forall \varepsilon>0\ \exists R>0\text{ such that }
\operatorname{supp}_\varepsilon(T)\subseteq \Pen(Y,R)\times \Pen(Y,R)\Bigr\},
\]
where $\operatorname{supp}_\varepsilon(T):=\{(x,y)\in X\times X:\ \|T(x,y)\|\ge\varepsilon\}$, this definition was originally introduced by Wang and Zhang in \cite{WZ23}.

\begin{definition}[Iterated relative Roe algebra]\label{def:iter-ideals}
Let $X$ be a proper metric space with bounded geometry, and let
$Y_1,\dots,Y_n\subseteq X$ be closed subsets. Let $I_{Y}$ denote the ghostly ideal
supported near $Y$.

Define the \emph{level-\(k\) iterated ghostly ideal} by
\[
I^{(k)}_{Y_1,\dots,Y_k}
\;:=\;
\overline{I_{Y_1}+I_{Y_2}+\cdots+I_{Y_k}}.
\]
The corresponding \emph{level-\(k\) iterated relative Roe algebra} is then defined by
\[
C^*_{\infty,Y_1,\dots,Y_k}(X)
\;:=\;
C^*(X)\big/ I^{(k)}_{Y_1,\dots,Y_k}.
\]
\end{definition}

\begin{remark}
When \(k=0\), this construction reduces to the standard Roe algebra $C^*(X)$. When \(k=1\), it is precisely the relative Roe algebra for $(X, Y_1)$ introduced in \cite{GWZ25}. 
In what follows, we will also refer to the level-$k$ iterated Roe algebra as the relative Roe algebra for $(X; Y_1, \dots, Y_k)$. For the sake of brevity, we will often omit the modifier ``level-$k$'' when no ambiguity arises.
\end{remark}

\begin{definition}\label{def:level-k-loc-alg}
\label{def:level-k-loc-alg}
Define the corresponding localization ideal by
\[
C_L^*\!\left(X;I^{(k)}_{Y_1,\dots,Y_k}\right)
:=
\left\{
g\in C_L^*(X):
g(t)\in I^{(k)}_{Y_1,\dots,Y_k}
\text{ for every }t\ge 0
\right\}.
\]
The \emph{level-\(k\) iterated relative localization algebra} is defined by
\[
C_{L,\infty,Y_1,\dots,Y_k}^*(X)
:=
C_L^*(X)\Big/
C_L^*\!\left(X;I^{(k)}_{Y_1,\dots,Y_k}\right).
\]
Thus, there is a short exact sequence
\[
0
\longrightarrow
C_L^*\!\left(X;I^{(k)}_{Y_1,\dots,Y_k}\right)
\longrightarrow
C_L^*(X)
\longrightarrow
C_{L,\infty,Y_1,\dots,Y_k}^*(X)
\longrightarrow 0.
\]
\end{definition}

\begin{definition}[Iterated relative $K$-homology]\label{def:iter-rel-K}
Let $X$ be a proper metric space with bounded geometry, and let $Y_1,\dots,Y_n \subseteq X$ be closed subsets. We define the \emph{iterated relative $K$-homology} of $X$ with respect to $Y_1,\dots,Y_n$ to be
\[ K_*^{\infty}(X;Y_1,\dots,Y_{k+1}) := K_*(C_{L,\infty,Y_1,\dots,Y_k}^*(X)).\]
 \end{definition}

When $k=1$, $K_*^{\infty}(X;Y_1,\dots,Y_{k+1})$ coincides with the relative $K$-homology group at infinity $K_*^{\infty}(X; Y_1)$ as in \cite{GWZ25}.

There is a canonical evaluation-at-zero map
        \[
          \mathrm{ev}:\; C_{L,\infty,Y_1,\dots,Y_k}^*(X)
          \longrightarrow C_{\infty,Y_1,\dots,Y_k}^*(X),
          \qquad g\mapsto g(0),
        \]
        which is well-defined and norm-decreasing.  Passing to \(K\)-theory and Rips complexes yields the level-\(k\) assembly map.

\begin{definition}[Iterated relative coarse Baum--Connes / Novikov conjectures]
Fix \(k\in\{1,\dots,n\}\).

\begin{enumerate}
  \item (\textbf{Level \(k\) iterated relative coarse Baum--Connes conjecture}).
  We say that the \emph{iterated relative coarse Baum--Connes conjecture} (or iterated relative CBC)
  for the tuple \((X;Y_1,\dots,Y_k)\) holds if the level-\(k\) assembly map
  \[
  \mu^{(k)}_{Y_1,\dots,Y_k,\infty}: \lim_{d\to\infty}K_*( C_{L,\infty,Y_1,\dots,Y_k}^*(P_d(X)))
          \longrightarrow K_*(C_{\infty,Y_1,\dots,Y_k}^*(X))
  \]
  is an isomorphism.

  \item (\textbf{Level \(k\) iterated relative coarse Novikov conjecture}).
  We say that the \emph{iterated relative coarse Novikov conjecture} (or iterated relative CNC)
  for \((X;Y_1,\dots,Y_k)\) holds if the level-\(k\) assembly map
  \[
  \mu^{(k)}_{Y_1,\dots,Y_k,\infty}: \lim_{d\to\infty}K_*( C_{L,\infty,Y_1,\dots,Y_k}^*(P_d(X)))
          \longrightarrow K_*(C_{\infty,Y_1,\dots,Y_k}^*(X))
  \]
  is injective.
\end{enumerate}
\end{definition}

Unwinding the notation, when \(k=1\) this coincides with the relative index map introduced in \cite{GWZ25},
\[
\mathrm{Ind}_{Y_1,\infty} \;:\; K_*^{\infty}(X;Y_1)\longrightarrow K_*\!\bigl(C_{\infty,Y_1}^*(X)\bigr).
\]

The following consequence shows that, in the finite case, the iterated relative theories agree with the corresponding single relative theory for the union. 
More precisely, the iterated relative \(K\)-homology, the iterated relative Roe algebra, and the iterated relative localization algebra all coincide with their relative counterparts associated to \(Y_1\cup\cdots\cup Y_k\).

\begin{proposition}\label{prop:sum-ideal-implies-equality}
Let \(X\) be a proper metric space and let \(Y_1,\dots,Y_n\subseteq X\) be closed subsets.
Write \(Y:=\bigcup_{j=1}^n Y_j\).
Then there exists a canonical isomorphism
\[
\Psi:\; C^*_{\infty,Y}(X)\longrightarrow C^*_{\infty,Y_1,\dots,Y_n}(X).
\]
Consequently,
\[
K_*\bigl(C^*_{\infty,Y}(X)\bigr)\ \cong\ K_*\bigl(C^*_{\infty,Y_1,\dots,Y_n}(X)\bigr).
\]
\end{proposition}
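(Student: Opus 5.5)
The plan is to show that the two ideals coincide inside \(C^*(X)\):
\[
I_G(Y)\;=\;I^{(n)}_{Y_1,\dots,Y_n}=\overline{I_{Y_1}+\cdots+I_{Y_n}},
\]
with \(I_G\) taken as the ghostly ideal supported near a set, in the sense recalled at the start of this section. Once this equality holds, \(\Psi\) is the identity of \(C^*(X)\) descended to the quotients, \(\Psi(T+I_G(Y))=T+I^{(n)}_{Y_1,\dots,Y_n}\). It is then automatically a well-defined \(*\)-isomorphism, and the \(K\)-theory statement follows by functoriality. The argument therefore has two inclusions.

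\emph{Easy inclusion.} For each \(j\) we have \(\Pen(Y_j,R)\subseteq\Pen(Y,R)\). Hence any \(T\in I_{Y_j}\) satisfies the defining condition for \(I_G(Y)\) with the same \(R\), so \(I_{Y_j}\subseteq I_G(Y)\). Since \(I_G(Y)\) is a closed ideal, it contains the sum \(I_{Y_1}+\cdots+I_{Y_n}\) and its closure.

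\emph{Hard inclusion.} Given \(T\in I_G(Y)\) and \(\varepsilon>0\), we must write \(T\) up to norm \(\varepsilon\) as a sum of elements of the \(I_{Y_j}\). The plan is to cut with multiplication operators. I will work with the discrete model \(H_X=\ell^2(Z)\otimes H\), where \(Z\subseteq X\) is a net, so that \(T(x,y)\) makes sense and characteristic functions of subsets of \(Z\) act as projections. First approximate \(T\) by some \(T'\in I_G(Y)\) of finite propagation \(r\). This step needs care, because truncating matrix entries is not norm-continuous. The cleanest route is to show that \(I_G(Y)\cap\mathbb C[X]\) is dense in \(I_G(Y)\). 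One way: for any \(S\in\mathbb C[X]\) with \(\|T-S\|<\varepsilon\), replace it by \(\chi_{\Pen(Y,R')}S\chi_{\Pen(Y,R')}\) for a large \(R'\), and check that the off-block parts are small using the \(\varepsilon\)-support condition of \(T\). If that estimate fails, I would instead work directly with \(T\) (no truncation) via the decomposition below, where only multiplication by projections occurs. This density/cutting step is where I expect the main technical obstacle, since \(\varepsilon\)-supports do not control norms directly. For bounded geometry, uniformly bounded row and column counts at finite propagation give Schur-type estimates, and this is the tool I would use.

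Next I decompose along the cover of \(Y\). Partition \(\Pen(Y,R)\) disjointly into \(P_1=\Pen(Y_1,R)\) and \(P_j=\Pen(Y_j,R)\setminus\bigcup_{i<j}\Pen(Y_i,R)\), and let \(\chi_j\) be the corresponding projections. The operator \(\chi_{\Pen(Y,R)}T'\chi_{\Pen(Y,R)}\) equals \(\sum_{j}\chi_{j}T'\chi_{\Pen(Y,R)}\), and \(T'\) is within \(\varepsilon\)-controlled error of it (this error is exactly the estimate above). Each summand \(\chi_jT'\chi_{\Pen(Y,R)}\) has propagation at most \(r\). Its rows lie in \(\Pen(Y_j,R)\) and its columns within \(r\) of those rows, so its support lies in \(\Pen(Y_j,R+r)\times\Pen(Y_j,R+r)\), i.e. it belongs to \(I_{Y_j}\). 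Letting \(\varepsilon\to0\) gives \(T\in\overline{\sum_j I_{Y_j}}\), which finishes the proof.
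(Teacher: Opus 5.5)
Your plan follows the paper's route: show $I_G(Y)=\overline{I_{Y_1}+\cdots+I_{Y_n}}$, then cut $T$ by characteristic functions of a partition adapted to the $Y_j$. The gap is the step you flagged yourself, and the Schur-type repair cannot close it. Membership in $I_G(Y)$ is an entrywise ($\varepsilon$-support) condition. It does not give norm approximation by finite-propagation operators supported near $Y$. If your estimate held, every ghostly ideal would equal the corresponding geometric ideal, and this fails without property A.

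Concretely, let $X$ be a coarse disjoint union of expanders and $Y=\{o\}$, so that $I_G(Y)$ is the ideal of ghosts. By bounded geometry and the Schur estimate, every finite-propagation ghost is compact. On the other hand, the projection onto the locally constant functions (a continuous function of the Laplacian), tensored with a rank-one projection, is a non-compact ghost in $C^*(X)$. Hence $I_G(Y)\cap\mathbb{C}[X]$ is not dense in $I_G(Y)$, and $T'$ is not norm-close to $\chi_{\Pen(Y,R)}T'\chi_{\Pen(Y,R)}$ in general. In your estimate the error is of order $N_r\|T-S\|$ with $r=\prop(S)$, and $r$ must grow as $\|T-S\|\to 0$, so the error need not tend to zero. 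The paper's proof makes the same ``by density'' reduction, so it has the same weak point.

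What does work is your fallback, made precise with two ingredients you have not stated.

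\emph{First ingredient.} Large entries of any $T\in C^*(X)$ sit near the diagonal. Given $\varepsilon>0$, pick $S\in\mathbb{C}[X]$ with $\|T-S\|<\varepsilon/2$. Since $\|T(x,y)-S(x,y)\|\le\|T-S\|$, every $(x,y)\in\supp_\varepsilon(T)$ has $S(x,y)\neq 0$, so $d(x,y)\le r_\varepsilon:=\prop(S)$.

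\emph{Second ingredient.} Partition all of $X$, not just $\Pen(Y,R)$, into sets $A_j$ with $d(x,Y_j)=d(x,Y)$ for $x\in A_j$, as the paper does. This is possible because $d(x,Y)=\min_j d(x,Y_j)$.

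Then $T=\sum_j\chi_{A_j}T$ exactly, with no remainder term. Each $\chi_{A_j}T$ lies in $C^*(X)$, and $\supp_\varepsilon(\chi_{A_j}T)=\supp_\varepsilon(T)\cap(A_j\times X)$. Suppose $\supp_\varepsilon(T)\subseteq\Pen(Y,R)\times\Pen(Y,R)$. Then this set lies in $\Pen(Y_j,R+r_\varepsilon)\times\Pen(Y_j,R+r_\varepsilon)$, so $\chi_{A_j}T\in I_{Y_j}$. No norm estimate is needed, and you even get $I_G(Y)=I_{Y_1}+\cdots+I_{Y_n}$ without taking closures.
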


\begin{proof}
By definition of the iterated relative Roe algebra (via successive quotients), we have
\[
C^*_{\infty,Y_1,\dots,Y_n}(X)
=\frac{C^*(X)}{I^{(n)}_{Y_1,\dots,Y_n}},
\qquad
I^{(n)}_{Y_1,\dots,Y_n}
:=\overline{I_{Y_1}+\cdots+I_{Y_n}}.
\]
On the other hand,
\[
C^*_{\infty,Y}(X)=\frac{C^*(X)}{I_Y}.
\]
Thus it suffices to show
\[
I_Y=\overline{I_{Y_1}+\cdots+I_{Y_n}}.
\]

The inclusion
\[
\overline{I_{Y_1}+\cdots+I_{Y_n}}\subseteq I_Y
\]
is immediate from \(Y_i\subseteq Y:=\bigcup_{i=1}^n Y_i\). Conversely, by density it suffices to consider a finite-propagation operator \(T\in I_Y\); partition \(X=\bigsqcup_i A_i\) so that \(d(x,Y_i)=d(x,Y)\) for \(x\in A_i\), and write \(T=\sum_i\chi_{A_i}T\). Since each summand has its rows near \(Y_i\), finite propagation implies that its whole support lies in a bounded neighbourhood of \(Y_i\times Y_i\), hence \(\chi_{A_i}T\in I_{Y_i}\), and therefore
\[
I_Y\subseteq \overline{I_{Y_1}+\cdots+I_{Y_n}}.
\]

\end{proof}

\begin{corollary}\label{prop:iterK-equals-unionK}
Let $X$ be a proper metric space with bounded geometry, and let $Y_1,\dots,Y_n\subseteq X$ be closed subsets.
Put $Y:=\bigcup_{j=1}^n Y_j$. Then there is a canonical isomorphism
\[
K_*^{\infty}(X;Y_1,\dots,Y_n)\ \cong\ K_*^{\infty}(X,Y).
\]
As a result, the following diagram commutes\[
  \begin{tikzcd}
    \varinjlim_d K_*^{\infty}(P_d(X);Y)
      \ar[r,"\mu_{Y,\infty}"] \ar[d] & K_*\!\bigl(C_{\infty,Y}^*(X)\bigr) \ar[d, "(\Psi)_*"] \\
    \varinjlim_d K_*^{\infty}(P_d(X);Y_1,\dots,Y_n)
      \ar[r,"\mu^{(k)}_{Y_1,\dots,Y_k,\infty}"] & K_*\!\bigl(C_{\infty,Y_1,\dots,Y_k}^*(X)\bigr).
  \end{tikzcd}
  \]
\end{corollary}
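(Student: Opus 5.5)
The plan is to promote the ideal equality from \Cref{prop:sum-ideal-implies-equality} to the localization level, and then check that the resulting isomorphisms are compatible with evaluation at zero.

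First, by \Cref{prop:sum-ideal-implies-equality} we have $I_Y=I^{(n)}_{Y_1,\dots,Y_n}$ as ideals of $C^*(X)$; note that $\Psi$ is then literally the identity on $C^*(X)$ passed to the quotient. Second, it follows that
\[
C_L^*\bigl(X;I_G(Y)\bigr)=C_L^*\bigl(X;I^{(n)}_{Y_1,\dots,Y_n}\bigr),
\]
because both are defined pointwise in $t$ by the condition $g(t)\in I$ for a single ideal $I$. Third, the two localization quotients $C_L^*(X)/C_L^*(X;I)$ therefore coincide. We must reconcile this with \Cref{def:relative-localization}, where $C^*_{L,\infty,Y}(X)$ is defined as functions into $C^*_{\infty,Y}(X)$ with propagation tending to zero. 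There is a natural $*$-homomorphism $C_L^*(X)/C_L^*(X;I_G(Y))\to C^*_{L,\infty,Y}(X)$, given by $g\mapsto \pi_Y\circ g$. I would use it to identify the two descriptions, or simply take the quotient description as the working definition (as is implicit in \Cref{def:level-k-loc-alg} at $k=1$). Either way, this gives an isomorphism $K_*(C^*_{L,\infty,Y}(X))\cong K_*(C^*_{L,\infty,Y_1,\dots,Y_n}(X))$, that is, $K_*^\infty(X;Y_1,\dots,Y_n)\cong K_*^\infty(X,Y)$.

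Next, apply the same argument with $X$ replaced by the Rips complex $P_d(X)$. Here the $Y_j$ and $Y$ are viewed as subsets of $P_d(X)$ via the canonical coarse equivalence, with penumbras taken in $P_d(X)$. The ideal equality on $P_d(X)$ is again \Cref{prop:sum-ideal-implies-equality}, since that proof used only finite propagation and a partition of the space. These identifications are all induced by identity maps at the level of $C_L^*(P_d(X))$. Hence they commute with the connecting maps $P_d(X)\to P_{d'}(X)$ for $d\le d'$, which are induced by the same covering isometries in both theories. We may therefore pass to $\varinjlim_d$, which yields the left vertical arrow of the diagram.

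Commutativity of the square is then immediate. Evaluation at zero on $C_L^*(P_d(X))$ followed by the quotient to $C^*(P_d(X))/I$ is one and the same map for both theories, because the ideals agree. The right vertical map $(\Psi)_*$ is induced by the identity on $C^*(X)$ modulo $I$, and it is compatible with the identification $K_*(C^*_{\infty,Y}(P_d(X)))\cong K_*(C^*_{\infty,Y}(X))$ used to define $\mu_{Y,\infty}$.

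The main obstacle I expect is the identification of \Cref{def:relative-localization}, functions into the quotient with propagation tending to zero, with the quotient $C_L^*(X)/C_L^*(X;I_G(Y))$. Injectivity of the comparison map is clear. For surjectivity, I would lift a uniformly continuous path $f(t)$ by choosing, on a discrete grid $t=k$, lifts $S_k\in C^*(X)$ whose propagation is close to $\prop(f(k))$. I would then interpolate linearly, using bounded geometry to control norms. If this lifting turns out to be delicate, I would adopt the quotient description as the definition for $k=1$, consistent with the paper's remark that the relative algebras are the $k=1$ cases of the iterated ones.
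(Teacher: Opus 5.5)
Your proof is correct and follows the paper's argument. The paper also derives the corollary from the canonical quotient maps, which are identities once $I_Y=\overline{I_{Y_1}+\cdots+I_{Y_n}}$ by \Cref{prop:sum-ideal-implies-equality}, together with naturality of evaluation at zero; your observation that the localization ideals $C_L^*(X;I)$ then coincide pointwise is a cleaner route than the paper's colimit-over-$R$ sketch.

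You are right that the paper silently identifies \Cref{def:relative-localization} with the quotient $C_L^*(X)/C_L^*(X;I_G(Y))$. On a uniformly discrete space your lifting step goes through if you cut each grid lift off near $Y$, replacing $S_k$ by $\chi_{X\setminus\Pen(Y,R)}S_k$. Bounded geometry then gives $\|S_k\|\le N_{r_k}\bigl(\|f(t_k)\|+\varepsilon\bigr)$ with $r_k=\prop(S_k)$, which keeps the interpolation bounded and uniformly continuous.
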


\begin{proof}
The statements are all natural consequences of the functoriality of the localization and Roe constructions
and of the canonical quotient maps that define the iterated relative algebras.

For each $R>0$ there are canonical maps
\[
C_{L,\infty}^*(X\setminus Y_R)\longrightarrow C_{L,\infty}^*(U_R;Y_1,\dots,Y_{k-1}),
\] and likewise there is the natural quotient
\[
C_{\infty,Y}^*(X)\longrightarrow C_{\infty,Y_1,\dots,Y_k}^*(X).
\]
Passing to $K$-theory and then to the colimit over $R$ yields the commuting square displayed above;
this shows that the iterated index map is the composition of the union-relative index map with the
induced map on the quotient.
\end{proof}

\begin{remark}\label{rem:countable-ghost}
As a side note, the behavior of ghostly ideals changes when dealing with a countable family of closed subsets $\{Y_k\}_{k\in\mathbb N}$. While our finite-iteration result guarantees that $I_G^{(N)}(Y_1,\dots,Y_N) = I_G(\bigcup_{k=1}^N Y_k)$, for a countable family we define the countably iterated ghostly ideal by
\[
I_G^{(\omega)}(Y_1,Y_2,\dots)\;=\;
\overline{I_G(Y_1)+I_G(Y_2)+\cdots}:=\left\langle I_G(Y_1), I_G(Y_2),\cdots\right\rangle.
\]
In general, we only have the inclusion $I_G^{(\omega)}(Y_1,Y_2,\dots) \subseteq I_G(\bigcup_{k\ge 1} Y_k)$, and this inclusion can be strict. 

To see this strict inclusion, consider $X=\mathbb Z^2$ equipped with the Euclidean metric, and the rays $Y_k:=\{(kn,n)\in\mathbb Z^2 \mid n>0\}$ for each $k\ge 1$. Let $A:=\{(m,0)\in\mathbb Z^2 \mid m>0\}$ be the positive $x$-axis. Notice that $A$ is contained in the $1$-neighborhood of the infinite union $\bigcup_{k\ge 1}Y_k$, because $(m,1) \in Y_m$ is at distance $1$ from $(m,0)$. By definition, any element in $I_G^{(\omega)}(Y_1,Y_2,\dots)$ can be approximated by a finite sum, which implies it belongs to $I_G(\bigcup_{k=1}^n Y_k)$ for some integer $n$. However, for any fixed $n$, $A$ eventually leaves every bounded neighborhood of this finite union $\bigcup_{k=1}^n Y_k$. Consequently, an operator supported on $A$ belongs to $I_G(\bigcup_{k\ge 1} Y_k)$ but cannot be approximated by elements in $I_G^{(\omega)}(Y_1,Y_2,\dots)$.
\end{remark}

\subsection{Reduction}
Let us recall a basic dichotomy for bounded geometry proper metric spaces.

\begin{definition}[{\cite[Def.~5.4]{GLWZ24}}]\label{def:icc}
A bounded geometry proper metric space $X$ is said to have an \emph{infinite coarse component}
if there exists $R>0$ such that the Rips complex $P_R(X)$ has an unbounded connected component.
\end{definition}
Due to \cite[Lemma~5.5]{GLWZ24}, a bounded geometry proper metric space \(X\) either has an \emph{infinite coarse component},
or else \(X\) is a coarse disjoint union of a sequence of finite metric spaces.
In this paper, we call \(X\) a \emph{continuous space} in the former situation, and a \emph{sparse space} in the latter.

\begin{theorem}\label{thm:iterated-reduction}
Let \(N\ge 2\), and let \(X_1,\dots,X_N\) be proper metric spaces with bounded geometry.
Let
\[
X:=\prod_{i=1}^N X_i
\]
be the product space equipped with the \(\ell^2\)-product metric, and $F^{(n)}=\prod_{i\ne n}X_i\times\{o_n\}$ the $n$-th face, where $o_n\in X_n$ is the based point. Assume that the iterated coarse Novikov conjecture holds for $(F^{(n)}; F^{(1)}\cap F^{(n)},\dots,F^{(n-1)}\cap F^{(n)})$ and  the iterated relative coarse Novikov conjecture holds for
\[
(X,\ F^{(1)},\ F^{(2)},\ \dots,\ F^{(n)}).
\]
Then the iterated relative coarse Novikov conjecture also holds for
\[
(X,\ F^{(1)},\ F^{(2)},\ \dots,\ F^{(n-1)}).
\]
\end{theorem}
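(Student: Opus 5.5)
We will compare the level-$n$ and level-$(n-1)$ theories through a short exact sequence of ideals and apply the five lemma. Write $I^{(n-1)}:=I^{(n-1)}_{F^{(1)},\dots,F^{(n-1)}}$ and $I^{(n)}:=I^{(n)}_{F^{(1)},\dots,F^{(n)}}$. Since $I^{(n-1)}\subseteq I^{(n)}$, there is a short exact sequence
\[
0\longrightarrow I^{(n)}/I^{(n-1)}\longrightarrow C^*_{\infty,F^{(1)},\dots,F^{(n-1)}}(X)\longrightarrow C^*_{\infty,F^{(1)},\dots,F^{(n)}}(X)\longrightarrow 0,
\]
and there is an analogous sequence for the localization algebras, using the ideals $C_L^*(X;I^{(k)})$ of Definition~\ref{def:level-k-loc-alg}. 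Both sequences are formed on each Rips complex $P_d(X)$. Evaluation at zero gives a map between them, hence a commutative ladder of six-term exact sequences, and then a ladder after passing to $\varinjlim_d$.

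\textbf{Identifying the ideal term.} By the isomorphism theorem, $I^{(n)}/I^{(n-1)}=(I^{(n-1)}+I_{F^{(n)}})/I^{(n-1)}\cong I_{F^{(n)}}/(I_{F^{(n)}}\cap I^{(n-1)})$. The key claim is
\[
I_{F^{(n)}}\cap \overline{I_{F^{(1)}}+\dots+I_{F^{(n-1)}}}=\overline{\textstyle\sum_{j<n} I_{F^{(j)}\cap F^{(n)}}},
\]
computed inside the ghostly ideal of $F^{(n)}$. To prove it, first apply Proposition~\ref{prop:sum-ideal-implies-equality} to rewrite the right-hand side as $I_{F^{(n)}}\cap I_{\bigcup_{j<n}F^{(j)}}$. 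Then use the product structure: the faces are coordinate hyperplanes, so the $R$-neighbourhoods satisfy
\[
\Pen(F^{(j)},R)\cap\Pen(F^{(n)},R)\subseteq \Pen(F^{(j)}\cap F^{(n)},\sqrt2R).
\]
This gives coarse transversality. Next, the ideal $I_{F^{(n)}}$ is Morita equivalent to $C^*(F^{(n)})$, with the equivalence compatible with ghostly ideals. This follows from the standard fact that the ideal of operators supported near a subspace has the same $K$-theory as the Roe algebra of that subspace, carried out at the level of ghostly ideals. Combining these, the $K$-theory of the ideal term is identified with $K_*(C^*_{\infty,F^{(1)}\cap F^{(n)},\dots,F^{(n-1)}\cap F^{(n)}}(F^{(n)}))$. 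The same argument on the localization side identifies the corresponding term with $\varinjlim_d K_*(C^*_{L,\infty,\dots}(P_d(F^{(n)})))$, and these identifications are compatible with $\mathrm{ev}_0$.

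\textbf{The five-lemma step.} In the resulting ladder, the maps for the ideal terms are the iterated assembly maps for $(F^{(n)};F^{(1)}\cap F^{(n)},\dots,F^{(n-1)}\cap F^{(n)})$, which are injective by hypothesis. The maps for the quotient terms are the level-$n$ iterated assembly maps, also injective by hypothesis. The middle map is the one we want to be injective. The four-lemma gives injectivity of the middle map only if the assembly map on the quotient of one degree lower, which feeds the boundary map, is surjective rather than merely injective. So plain injectivity of the outer maps is not enough.

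I would try two routes. The first is to run the argument with the full CBC isomorphism on the quotient level, obtained through the twisted algebras of Theorem~\ref{twist}. Concretely, I would build the same ladder for the twisted algebras with coefficients in $\mathcal A(E)$ and the Bott maps of Lemma~\ref{twist-eq}. There all relevant assembly maps are isomorphisms, so the five lemma applies. Injectivity then descends through the commutative Bott squares. The second route is to choose compatible splittings.

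The main obstacle is therefore twofold. One part is proving the intersection identity for ghostly ideals, with its compatible Morita identification across $\mathrm{ev}_0$. The other is arranging for the diagram chase to need only injectivity, which I expect requires the twisted, isomorphism-level version of the ladder.
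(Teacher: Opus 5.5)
You have the right skeleton: a ladder of six-term sequences, with the ideal term identified with the iterated algebras of $(F^{(n)};F^{(1)}\cap F^{(n)},\dots,F^{(n-1)}\cap F^{(n)})$. You also correctly see that the four lemma with merely injective outer maps does not give injectivity of the middle map. But that is exactly the step you leave open, and the route you favour cannot close it.

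The theorem is a pure reduction statement whose only hypotheses are injectivity of two assembly maps, so no coarsely proper coefficient algebra is available. Even in the application, Theorem~\ref{twist} needs a fibred coarse embedding of $X$ relative to the union of the faces involved, and Proposition~\ref{fcebdry} supplies this only for the full boundary. For a partial tuple such as $(X;F^{(1)},\dots,F^{(n-1)})$ there are no trivializations in the $n$-th coordinate near $F^{(n)}$, and such intermediate pairs need not admit relative fibred coarse embeddings at all. This is precisely why the paper introduces the face-by-face method. So there is no twisted ladder of isomorphisms to run, and the ``compatible splittings'' alternative is left unspecified.

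Separately, the Morita fact you quote concerns the geometric ideal $C^*(X;F^{(n)})$ of operators supported near $F^{(n)}$, which is what the paper uses in the sparse case. The ghostly ideal $I_{F^{(n)}}$ is larger: for instance, it contains Kazhdan-type ghost projections in the $X_n$-direction when $X_n$ is an expander. So that standard fact does not apply to it as stated.

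The missing idea is to avoid surjectivity altogether by showing that one map in the ladder vanishes. This uses the dichotomy that $X_n$ is either continuous or sparse.

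\textbf{Continuous case.} For large $d$, $P_d(X_n)$ contains a ray. In the limit over $d$, the localization face term and its intersection with the level-$(n-1)$ localization ideal therefore factor through localization algebras of spaces $P_d(\cdot)\times\mathbb{R}_+$, which are flasque. Lemma~\ref{lem:face-intersection} ensures the ray factor survives in the intersection. Hence $\ker\pi_L\to C^*_{L,\infty,F^{(1)},\dots,F^{(n-1)}}(P_d(X))$ is zero on $K$-theory and $(\pi_L)_*$ is injective. Then $\pi_*\circ\mu^{(n-1)}=\mu^{(n)}\circ(\pi_L)_*$ gives injectivity of $\mu^{(n-1)}$ from that of $\mu^{(n)}$ alone; the face hypothesis is not even needed.

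\textbf{Sparse case.} Write $X_n=\bigsqcup_k Z_k$. Every finite-propagation operator is block-diagonal along the pieces $X_{<n}\times Z_k\times X_{>n}$, up to a term supported near $F^{(n)}$. Block truncations split the face term off in the block-diagonal subalgebra. This makes the Roe-side quotient map surjective on $K$-theory, so the Roe-side boundary map vanishes and $\iota_*$ is injective (Lemma~\ref{lem:sparse-face-injection}).

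The chase then uses only injectivity. Suppose $\mu^{(n-1)}(x)=0$. Injectivity of $\mu^{(n)}$ gives $(\pi_L)_*x=0$, so $x$ comes from some $y$ in the localization face term. Then $\iota_*\mu(y)=0$ forces $\mu(y)=0$, hence $y=0$ by the face hypothesis.

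So what replaces the surjectivity your four-lemma argument demands is a vanishing statement. In the continuous case it is the vanishing of the localization-side inclusion; in the sparse case it is the vanishing of the Roe-side boundary map.
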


Since any space is either sparse or continuous, we shall prove this theorem in the following two cases.

\subsubsection{When $X_n$ is continuous}

To prove \Cref{thm:iterated-reduction} in this case, we first establish a geometric observation regarding the intersection of faces. Recall from Section 3.1 that $F_M^{(i)}$ denotes the $i$-th face of thickness $M$.

\begin{lemma}\label{lem:face-intersection}
Let $X = \prod_{i=1}^N X_i$, and let $F_M^{(i)}$ denote the $i$-th face of thickness $M$. Let $F^{(n)} \times \R_+$ denote the subset of $X$ where the $n$-th coordinate is restricted to a ray $\R_+ \subseteq X_n$. Then
\[
\left( \bigcup_{i=1}^{n-1} F_M^{(i)} \right) \cap \left(F^{(n)}\times \R_+ \right) = U_{<n} \times \R_+ \times X_{>n},
\]
where $U_{<n} \subseteq \prod_{i=1}^{n-1} X_i$ is the projection of $\bigcup_{i=1}^{n-1} F_M^{(i)}$ onto the first $n-1$ coordinates.
\end{lemma}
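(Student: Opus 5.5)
This is a set-theoretic identity, and I will prove it by double inclusion after fixing notation. Write a point of $X$ as $x=(x_{<n},x_n,x_{>n})$ with $x_{<n}\in\prod_{i=1}^{n-1}X_i$, $x_n\in X_n$ and $x_{>n}\in\prod_{i>n}X_i$.

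I read $F^{(n)}\times\R_+$ as the set
\[
\Bigl\{x : x_{<n}\in\textstyle\prod_{i<n}X_i,\ x_n\in\R_+,\ x_{>n}\in\prod_{i>n}X_i\Bigr\}.
\]
Here $\R_+\subseteq X_n$ is the given ray (a coarsely embedded ray starting at $o_n$, which exists because $X_n$ is continuous). In other words, the $n$-th coordinate ranges over the ray and the other coordinates are free; this is the meaning stated in the lemma.

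The key observation is that, for $i\le n-1$,
\[
F_M^{(i)}=\Bigl\{x : d_{X_i}(x_i,o_i)\le M\Bigr\},
\]
so the condition defining $F_M^{(i)}$ involves only the coordinate $x_i$, which is one of the first $n-1$ coordinates. Hence each such face, and therefore the union $\bigcup_{i<n}F_M^{(i)}$, has the form $U_{<n}\times X_n\times X_{>n}$. Here
\[
U_{<n}=\bigcup_{i<n}\Bigl\{x_{<n} : d_{X_i}(x_i,o_i)\le M\Bigr\},
\]
which is exactly the projection of $\bigcup_{i<n}F_M^{(i)}$ onto the first $n-1$ coordinates.

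I will check this cylinder description directly. If $x$ lies in the union, then $x_{<n}$ lies in its projection $U_{<n}$. Conversely, if $x_{<n}\in U_{<n}$, then some $i<n$ satisfies $d(x_i,o_i)\le M$, and so $x\in F_M^{(i)}$ whatever the values of $x_n$ and $x_{>n}$ are.

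Intersecting the cylinder $U_{<n}\times X_n\times X_{>n}$ with $\prod_{i<n}X_i\times\R_+\times X_{>n}$ is now coordinatewise. The result is $U_{<n}\times\R_+\times X_{>n}$, which gives both inclusions at once.

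The only delicate point is the interpretation of $F^{(n)}\times\R_+$. The face $F^{(n)}$ fixes $x_n=o_n$, and the notation replaces that coordinate by the ray. I will state this convention explicitly, and note that the ray passes through $o_n$, so that $F^{(n)}$ sits inside $F^{(n)}\times\R_+$. Once the convention is fixed, no metric estimates are needed. In particular, the choice of the $\ell^2$-product metric plays no role, because the thickened faces are defined coordinatewise.
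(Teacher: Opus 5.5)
Your proposal is correct and follows essentially the same argument as the paper. Both observe that $\bigcup_{i<n}F_M^{(i)}=U_{<n}\times X_n\times X_{>n}$, since each of these thickened faces constrains only one of the first $n-1$ coordinates, and both then intersect coordinatewise with $X_{<n}\times\R_+\times X_{>n}$; your explicit check of the cylinder description and your stated convention for $F^{(n)}\times\R_+$ are just slightly more careful versions of the paper's steps.
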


\begin{proof}
We decompose the product space as $X = X_{<n} \times X_n \times X_{>n}$, where $X_{<n} = \prod_{i=1}^{n-1} X_i$ and $X_{>n} = \prod_{i=n+1}^N X_i$. 

By definition, the condition for a point to lie in the first $n-1$ thickened faces imposes restrictions solely on the coordinates in $X_{<n}$. Thus, we can write $\bigcup_{i=1}^{n-1} F_M^{(i)} = U_{<n} \times X_n \times X_{>n}$ for some subset $U_{<n} \subseteq X_{<n}$. On the other hand, the set $F^{(n)} \times \R_+$ restricts only the $n$-th coordinate to $\R_+$, which means $F^{(n)}\times \R_+ = X_{<n} \times \R_+ \times X_{>n}$. 

Since the Cartesian product distributes over intersections, taking the intersection of these two sets yields:
\begin{align*}
\left( \bigcup_{i=1}^{n-1} F_M^{(i)} \right) \cap \left(F^{(n)} \times \R_+\right) 
&= (U_{<n} \times X_n \times X_{>n}) \cap (X_{<n} \times \R_+ \times X_{>n}) \\
&= (U_{<n} \cap X_{<n}) \times (X_n \cap \R_+) \times (X_{>n} \cap X_{>n}) \\
&= U_{<n} \times \R_+ \times X_{>n}.
\end{align*}
This completes the proof.
\end{proof}

\begin{lemma}\label{lem:direct-limit-zero-maps}
Assume that $X_n$ is continuous. Then the canonical inclusion
\[
\lim_{d\to\infty} \frac{C_L^*\left(P_d(X); I_{F^{(n)}}\right)}{C_L^*\left(P_d(X); I_{F^{(n)}}\right)\cap C_L^*\left(P_d(X); I^{(n-1)}_{F^{(1)},\dots,F^{(n-1)}}\right)}
\longrightarrow 
\lim_{d\to\infty} \frac{C_L^*\left(P_d(X)\right)}{C_L^*\left(P_d(X); I^{(n-1)}_{F^{(1)},\dots,F^{(n-1)}}\right)}
\]
induces the zero map on $K$-theory.
\end{lemma}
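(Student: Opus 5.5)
The plan is an Eilenberg swindle along a ray in the continuous factor $X_n$. The source quotient is a localization algebra of operators supported near the face $F^{(n)}=\prod_{i\neq n}X_i\times\{o_n\}$, taken modulo the part that is already ghostly near the first $n-1$ faces. It is therefore controlled by the geometry of a coarse neighbourhood of $F^{(n)}$ relative to $\bigcup_{i<n}F^{(i)}$.

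\textbf{Step 1 (a ray in $X_n$).} Since $X_n$ is continuous, Definition~\ref{def:icc} gives an $R_0>0$ such that $P_{R_0}(X_n)$ has an unbounded connected component. Take $o_n$ in this component. Choose points $o_n=z_0,z_1,z_2,\dots$ with $d(z_k,z_{k+1})\le R_0$ and $d(z_k,o_n)\to\infty$, together with a coarse map $\gamma:\R_+\to X_n$ that interpolates them. This makes sense of $F^{(n)}\times\R_+$ as in Lemma~\ref{lem:face-intersection}. By Lemma~\ref{lem:face-intersection}, this half-cylinder meets $\bigcup_{i<n}F^{(i)}_M$ exactly in $U_{<n}\times\R_+\times X_{>n}$, a product in which the $\R_+$-direction is unconstrained. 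Hence translating along the ray preserves the relative condition ``near $F^{(1)},\dots,F^{(n-1)}$.''

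\textbf{Step 2 (push to infinity).} For a class in $K_*$ of the source quotient, represented at some scale $d$ by $g\in C_L^*(P_d(X);I_{F^{(n)}})$, first cut $g$ off to a bounded neighbourhood of $F^{(n)}$. This is allowed modulo the ideal, since $g(t)$ is ghostly near $F^{(n)}$. Let $V_k$ be the isometry of $H_{P_d(X)}$ (for $d\ge R_0$) that moves the $n$-th coordinate from a neighbourhood of $o_n$ to a neighbourhood of $z_k$. Each $V_k$ has propagation at most $d(o_n,z_k)$, which is not uniform in $k$. Instead we use the step isometries $V_{k+1}V_k^{*}$, whose propagation is at most $R_0$. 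We then form
\[
\Phi(g)(t)=\bigoplus_{k\ge 0} V_k\, g(t)\, V_k^{*}.
\]
This operator is not supported near $F^{(n)}$. Its image in the target quotient $C_L^*(P_d(X))/C_L^*(P_d(X);I^{(n-1)})$ is still well defined, because Step 1 shows that the relative condition with respect to $F^{(1)},\dots,F^{(n-1)}$ is uniform in $k$.

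\textbf{Step 3 (the swindle).} Homotope $\bigoplus_{k\ge0}$ to $\bigoplus_{k\ge1}$ by the shift $V_{k+1}V_k^*$, using the usual rotation trick along consecutive points of the ray. This yields $[\iota(g)]+[\Phi(g)]=[\Phi(g)]$ in $K_*$ of the target, so $\iota_*[g]=0$. The conclusion holds after passing to $d\to\infty$, which is needed so that $d\ge R_0$.

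\textbf{Main obstacle.} The hard part is making this shift homotopy legitimate inside the \emph{localization} algebra, where propagation must tend to $0$ as $t\to\infty$. A fixed step of length $R_0$ does not do that. I would first try to reparametrize time: replace the shift by the path $t\mapsto$ (shift by $\gamma$ at speed $1/(1+t)$), subdividing each step $z_k\to z_{k+1}$ in the Rips complex $P_d(X_n)$ along edges. This is where $P_d$ and the direct limit over $d$ are needed, since $\gamma$ only becomes a continuous path there. A secondary check is that all operators remain locally compact: the direct sum is locally finite because $d(z_k,o_n)\to\infty$, and bounded geometry guarantees this.
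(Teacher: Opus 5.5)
Your argument is essentially the paper's proof: both push $F^{(n)}$ off to infinity along a ray in $P_d(X_n)$, and both use Lemma~\ref{lem:face-intersection} to see that this is compatible with the first $n-1$ faces; the paper, however, packages the Eilenberg swindle (including the propagation issue you single out) as the standard flasqueness of $C_L^*(P_d(F^{(n)})\times\R_+)$ and of its intersection with $C_L^*(P_d(X);I^{(n-1)}_{F^{(1)},\dots,F^{(n-1)}})$, followed by the six-term sequence, rather than running the shift by hand in the target quotient. One correction: your cut-off to a bounded neighbourhood of $F^{(n)}$ is not justified by ghostliness, because ghostliness controls matrix entries rather than the norm of the cut-off error, and that error does not lie in the ideal $C_L^*(P_d(X);I_{F^{(n)}})\cap C_L^*(P_d(X);I^{(n-1)}_{F^{(1)},\dots,F^{(n-1)}})$ that you quotient by, so what you actually need there is the $K$-theoretic identification $K_*\bigl(C_L^*(P_d(X);I_{F^{(n)}})\bigr)\cong\lim_M K_*\bigl(C_L^*(P_d(F^{(n)}_M))\bigr)$ (compatibly with the intersections), which the paper also invokes without proof.
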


\begin{proof}
Since the space $X_n$ is continuous, for a sufficiently large scale $d$, we can find a geodesic ray $\R_+ \subseteq P_d(X_n)$. We may therefore identify $P_d(F^{(n)}) \times \R_+$ as a subspace of $P_d(X)$.

Recall that the ideal $I_{F^{(n)}}$ in the localization algebra corresponds to the subspace $F^{(n)}$. By the definition of localization algebras associated to subspaces, we can express the $K$-theory of the ideal as a direct limit over the thickened faces:
\[
K_*\left(C_L^*\left(P_d(X); I_{F^{(n)}}\right)\right) \cong \lim_{M\to\infty} K_*\left(C_L^*\left(P_d(F_M^{(n)})\right)\right).
\]
For $d$ sufficiently large, the Rips complex of the thickened face $P_d(F_M^{(n)})$ is homotopy equivalent to $P_d(F^{(n)})$. Consequently, when passing to the limit as $d \to \infty$, the induced map on $K$-theory factors through the $K$-theory of $C_L^*(P_d(F^{(n)}) \times \R_+)$ modulo the intersection with the ideal of the previous $n-1$ faces. 

To simplify notation, let $J_{<n} = I^{(n-1)}_{F^{(1)},\dots,F^{(n-1)}}$. The inclusion thus factors through the quotient algebra:
\[ 
\frac{C_L^*(P_d(F^{(n)}) \times \R_+)}{C_L^*(P_d(F^{(n)}) \times \R_+) \cap C_L^*(P_d(X); J_{<n})}. 
\]
It is a standard result that the localization algebra $C_L^*(P_d(F^{(n)}) \times \R_+)$ is flasque. Therefore, we have $K_*\left(C_L^*(P_d(F^{(n)}) \times \R_+)\right) = 0$. 

By the six-term exact sequence in $K$-theory, the quotient algebra will also have vanishing $K$-theory if the intersection ideal 
\[ 
\mathcal{I}_{\cap} := C_L^*(P_d(F^{(n)}) \times \R_+) \cap C_L^*(P_d(X); J_{<n}) 
\]
has vanishing $K$-theory.

We now analyze $\mathcal{I}_{\cap}$ explicitly. The ideal $J_{<n}$ is generated by the geometric subspaces $F^{(1)}, \dots, F^{(n-1)}$, which means its associated localization algebra at scale $d$ can be expressed as the inductive limit over the thickened faces:
\[
C_L^*\left(P_d(X); J_{<n}\right) = \lim_{M\to\infty} C_L^*\left(P_d\left(\bigcup_{i=1}^{n-1} F_M^{(i)}\right)\right).
\]
By definition, the intersection of localization algebras associated to geometric subspaces coincides with the localization algebra of their geometric intersection. Therefore, we can write the intersection ideal as:
\begin{align*}
\mathcal{I}_{\cap} &= C_L^*\left(P_d(F^{(n)}) \times \R_+\right) \cap \lim_{M\to\infty} C_L^*\left(P_d\left(\bigcup_{i=1}^{n-1} F_M^{(i)}\right)\right) \\
&= \lim_{M\to\infty} C_L^*\left(  \left(P_d\left(F^{(n)} \right)\times \R_+\right) \cap P_d\left(\bigcup_{i=1}^{n-1} F_M^{(i)} \right) \right).
\end{align*}
Applying \Cref{lem:face-intersection}, this geometric intersection cleanly separates the $\R_+$ factor:
\[ 
\left( F^{(n)} \times \R_+ \right) \cap \left( \bigcup_{i=1}^{n-1} F_M^{(i)} \right) = \left( F^{(n)} \cap \bigcup_{i=1}^{n-1} F_M^{(i)} \right) \times \R_+. 
\]
Substituting this back, we obtain:
\[
\mathcal{I}_{\cap} = \lim_{M\to\infty} C_L^*\left( P_d\left( F^{(n)} \cap \bigcup_{i=1}^{n-1} F_M^{(i)} \right) \times \R_+ \right).
\]
Because this intersection explicitly retains the Cartesian factor $\R_+$, the corresponding localization algebra $\mathcal{I}_{\cap}$ is again flasque. 

This implies $K_*(\mathcal{I}_{\cap}) = 0$. Since both the flasque algebra and its ideal have zero $K$-theory, their quotient has zero $K$-theory. Consequently, as $d \to \infty$, the canonical inclusion map factors through an algebra with vanishing $K$-theory, meaning it induces the zero map on $K$-theory.
\end{proof}

\begin{proof}[Proof of \Cref{thm:iterated-reduction}; when $X_n$ is continuous]
By definition of the iterated relative localization algebras, for each fixed scale $d>0$, we have a short exact sequence of localization algebras:
\[
\begin{tikzcd}
0 \arrow[r] 
& \ker(\pi_L) \arrow[r] 
& C_{L,\infty,F^{(1)},\dots, F^{(n-1)}}^{*}(P_{d}(X)) \arrow[r,"{\pi_L}"] 
& C_{L,\infty,F^{(1)},\dots,F^{(n)}}^{*}(P_{d}(X)) \arrow[r] 
& 0,
\end{tikzcd}
\]
where the kernel $\ker(\pi_L)$ is precisely the quotient algebra
\[
\ker(\pi_L) \cong \frac{C_L^*\left(P_d(X); I_{F^{(n)}}\right)}{C_L^*\left(P_d(X); I_{F^{(n)}}\right)\cap C_L^*\left(P_d(X); I^{(n-1)}_{F^{(1)},\dots,F^{(n-1)}}\right)}
\]
analyzed in \Cref{lem:direct-limit-zero-maps}. Passing to $K$-theory, we obtain the following commutative diagram relating the localization $K$-theory groups to the Roe algebras at infinity:
\[
\begin{tikzcd}
K_*\left(C_{L,\infty,F^{(1)},\dots, F^{(n-1)}}^{*}(P_{d}(X))\right) \arrow[r,"{(\pi_L)_*}"] \arrow[d,"{\mu^{(n-1)}_{F^{(1)},\dots, F^{(n-1)},\infty}}"'] 
& K_*\left(C_{L,\infty,F^{(1)},\dots,F^{(n)}}^{*}(P_{d}(X))\right) \arrow[d,"{\mu^{(n)}_{F^{(1)},\dots, F^{(n)},\infty}}"]  \\ 
K_*\left(C_{\infty,F^{(1)},\dots, F^{(n-1)}}^{*}(P_{d}(X))\right) \arrow[r,"{\pi_*}"] 
& K_*\left(C_{\infty,F^{(1)},\dots,F^{(n)}}^{*}(P_{d}(X))\right)
\end{tikzcd}
\]

Now, taking the direct limit as $d \to \infty$, \Cref{lem:direct-limit-zero-maps} proves that the map induced by the inclusion of the kernel $\ker(\pi_L)$ into $C_{L,\infty,F^{(1)},\dots, F^{(n-1)}}^{*}(P_{d}(X))$ is the zero map on $K$-theory. 
By exactness, this vanishing implies that the induced map on $K$-theory $(\pi_L)_*$ is injective.

By a standard diagram chase argument, the injectivity of $\mu^{(n-1)}_{F^{(1)},\dots, F^{(n-1)},\infty}$ follows directly from the iterated relative coarse Novikov conjecture for $(X; F^{(1)}, \dots, F^{(n)})$. This finishes the proof.
\end{proof}

\subsubsection{When $X_n$ is sparse}

\begin{lemma}\label{lem:sparse-face-injection}
Assume that $X_n$ is sparse. Then the canonical inclusion
\[
\frac{C^*\left(X; F^{(n)}\right)}{C^*\left(X; {F^{(n)}}\right)\cap I^{(n-1)}_{F^{(1)},\dots,F^{(n-1)}}}
\longrightarrow
C^*_{\infty,F^{(1)},\dots,F^{(n-1)}}\left(X\right)
\]
induces an injection on $K$-theory.
\end{lemma}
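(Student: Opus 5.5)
Throughout, $C^*(X;F^{(n)})$ denotes the ghostly ideal $I_{F^{(n)}}$ (equivalently the Roe algebra of $X$ supported near $F^{(n)}$), and we write $J_{<n}:=I^{(n-1)}_{F^{(1)},\dots,F^{(n-1)}}$.

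The plan is to identify the quotient on the left with a Roe-type algebra built from the face, and then use sparseness of $X_n$ to split off that piece of $C^*_{\infty,F^{(1)},\dots,F^{(n-1)}}(X)$ as a direct summand, at least at the level of $K$-theory. Write $X_n=\bigsqcup_k Z_k$ as a coarse disjoint union of finite metric spaces, with $d(Z_k,Z_{k'})\to\infty$, and take the base point $o_n\in Z_0$. Every bounded neighbourhood $\Pen(F^{(n)},R)$ is then contained in $X_{<n}\times \Pen(Z_0,R')\times X_{>n}$. For $R$ large, $\Pen(Z_0,R')$ is a finite set that is coarsely separated from the rest of $X_n$.

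The first step is a decomposition. Fix $R_0$ such that $Z_0$ is $R_0$-separated from $X_n\setminus Z_0$ beyond a finite set. Let $P$ be the characteristic-function projection onto $X_{<n}\times Z_0\times X_{>n}$. For a finite-propagation operator $T$ with propagation smaller than the separation, $T$ commutes with $P$ up to a term supported near $F^{(n)}$ with bounded $n$-th coordinate, and such a term lies in $I_{F^{(n)}}$. This shows that $P C^*(X)P$ and $(1-P)C^*(X)(1-P)$ together with $I_{F^{(n)}}$ span $C^*(X)$ modulo an ideal. It also shows $I_{F^{(n)}}=PC^*(X)P$, since $Z_0$ is finite and hence $X_{<n}\times Z_0\times X_{>n}$ is coarsely equivalent to $F^{(n)}$.

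The second step passes to the quotient by $J_{<n}$. Because $J_{<n}$ is generated by ghostly ideals of subsets of the form $U\times X_n\times X_{>n}$, as in \Cref{lem:face-intersection}, conjugation by $P$ preserves $J_{<n}$, and $PJ_{<n}P=I_{F^{(n)}}\cap J_{<n}$. I would verify the latter using the decomposition argument of \Cref{prop:sum-ideal-implies-equality} applied to finite-propagation approximants. It follows that the image of $I_{F^{(n)}}/(I_{F^{(n)}}\cap J_{<n})$ in $C^*_{\infty,F^{(1)},\dots,F^{(n-1)}}(X)$ is the corner $\bar P\,C^*_{\infty,F^{(1)},\dots,F^{(n-1)}}(X)\,\bar P$, where $\bar P$ is the image of $P$. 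This $\bar P$ is a projection commuting with the quotient algebra, because the commutators $[P,T]$ lie in $I_{F^{(n)}}$. The quotient algebra therefore maps onto the direct sum of the two corners, and the map $a\mapsto \bar P a\bar P$ is a $*$-homomorphism that is a left inverse of the inclusion. Functoriality of $K$-theory then gives injectivity.

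The main obstacle is that $\bar P$ only commutes modulo $I_{F^{(n)}}$, not modulo $J_{<n}$. So $a\mapsto\bar Pa\bar P$ is not literally a homomorphism on $C^*_{\infty,F^{(1)},\dots,F^{(n-1)}}(X)$. To handle this, I would first pass to the further quotient by $I_{F^{(n)}}$ using the six-term sequence. Alternatively, and this is what I would try first, I would exploit sparseness more strongly: replace $P$ by the projection onto $X_{<n}\times \Pen(Z_0,S)\times X_{>n}$ with $S$ chosen between consecutive gaps. For operators of propagation less than the gap, $P$ then commutes exactly. A limiting argument over propagation, in which the gaps grow, gives a genuine block-diagonal decomposition on a dense subalgebra. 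This yields the splitting, and hence $K$-theoretic injectivity, via a continuity argument in $K$-theory over the directed system of finite-propagation subspaces.
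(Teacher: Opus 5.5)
There is a genuine gap, at the place you call the \emph{main obstacle}, and it starts one step earlier.

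\textbf{The false identification.} The claim $I_{F^{(n)}}=PC^*(X)P$, and with it $PJ_{<n}P=I_{F^{(n)}}\cap J_{<n}$, is false. As $R$ grows, $\Pen(F^{(n)},R)$ absorbs more and more of the blocks $X_{(k)}:=X_{<n}\times Z_k\times X_{>n}$. So the ideal of operators supported near $F^{(n)}$ is the increasing union of the corners $\chi_{Y_K}C^*(X)\chi_{Y_K}$, where $Y_K=\bigsqcup_{k\le K}X_{(k)}$. It contains operators such as $\chi_{X_{(1)}}T\chi_{X_{(1)}}$, which are orthogonal to $P$. Coarse equivalence of the slab with $F^{(n)}$ gives at most an isomorphism on $K$-theory, not an equality of ideals.

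\textbf{No left-inverse homomorphism.} More seriously, no projection of this kind commutes with $C^*(X)$ modulo $J_{<n}$, so $a\mapsto \bar P a\bar P$ is not a homomorphism. Your proposed repair does not produce one either. A projection $\chi_{Y_K}$ placed in a gap commutes only with operators whose propagation is below that gap, so the projection must change as the propagation grows. The operators of propagation $\le r$ form a subspace, not a subalgebra. There is no \emph{continuity of $K$-theory} along such a filtration, so you never obtain a $*$-homomorphism whose induced map is a left inverse. Making this route work would need controlled (quantitative) $K$-theory, or an asymptotic morphism into $\prod_K C^*(X)/\bigoplus_K C^*(X)$; neither is in the proposal.

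\textbf{The missing idea.} You do not need a left inverse on $C^*(X)/J_{<n}$ at all. Injectivity of $\iota_*$ is equivalent to the vanishing of the boundary map out of $K_*\bigl(C^*(X)/(C^*(X;F^{(n)})+J_{<n})\bigr)$. By naturality of the boundary map, this can be checked on a subalgebra whose corresponding quotient maps isomorphically onto this one.

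The paper uses the block-diagonal Roe algebra $C^*(X')$, in which the blocks $X_{(k)}$ are placed infinitely far apart. The argument then runs as follows.
\begin{enumerate}
\item In $C^*(X')$ every $\chi_{Y_K}$ commutes exactly with all operators. So $T\mapsto \chi_{Y_K}T\chi_{Y_K}$ is a genuine $*$-homomorphism, and it preserves $J_{<n}\cap C^*(X')$.
\item This gives a left inverse at each finite stage $K$. Passing to the limit over $K$, the face-ideal inclusion for $X'$ is injective on $K$-theory, so $p_*$ is surjective.
\item Sparseness gives $\mathbb{C}[X]=\mathbb{C}[X']+\mathbb{C}[X;F^{(n)}]$, since a propagation-$r$ operator has off-block entries only inside some $Y_K\times Y_K$. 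Hence $X$ and $X'$ have the same quotient modulo the face ideal and $J_{<n}$.
\item Therefore $q_*$ is surjective, and $\iota_*$ is injective by the six-term sequence.
\end{enumerate}

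\textbf{A side point.} This argument uses the geometric ideal: the closure of finite-propagation operators supported near $F^{(n)}$, i.e.\ $\varinjlim_K \chi_{Y_K}C^*(X)\chi_{Y_K}$. Your opening assertion that this coincides with the ghostly ideal $I_{F^{(n)}}$ is not justified in general. Ghostly elements need not be norm limits of such truncations.
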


\begin{proof}
For simplicity of notation, let $I_{<n} := I^{(n-1)}_{F^{(1)},\dots,F^{(n-1)}}$ denote the ghostly ideal generated by the first $n-1$ faces. 

Since $X_n$ is sparse, it can be expressed as a coarse disjoint union of finite metric spaces $X_n = \bigsqcup_{k=1}^\infty Z_k$. Consequently, the product space $X$ decomposes into a coarse disjoint union $\bigsqcup_{k=1}^{\infty}X_{(k)}$, where $X_{(k)} := X_{<n} \times Z_k \times X_{>n}$.

Let $C^*(X')$ denote the Roe algebra of $X$ equipped with the \emph{separated disjoint union metric} along the $n$-th coordinate. That is, for any $x \in X_{(k)}$ and $x' \in X_{(k')}$, we define
\[
d_{X'}(x,x') = \begin{cases} 
\infty, & \text{if } k \ne k', \\ 
d_X(x,x'), & \text{if } k = k'.
\end{cases}
\]
Since any operator with finite propagation in $X'$ naturally has finite propagation in $X$, we have $\mathbb{C}[X'] \subseteq \mathbb{C}[X]$. Thus, $C^*(X')$ canonically embeds as a $C^*$-subalgebra of $C^*(X)$. We define the corresponding geometric ideal and the ghostly ideal in $C^*(X')$ by intersection:
\[
C^*(X'; F^{(n)}) := C^*(X; F^{(n)}) \cap C^*(X'), \quad \text{and} \quad I'_{<n} := I_{<n} \cap C^*(X').
\]
Let $Y_K = \bigsqcup_{k=1}^K X_{(k)}$, equipped with the sparse metric, and $Y'_K$ the same set but equipped with the separated metric. The geometric ideal $C^*(X'; F^{(n)})$ is exactly the inductive limit of $C^*(Y'_K)$ as $K \to \infty$.

We construct the following commutative diagram on $K$-theory induced by the inclusion $C^*(X') \hookrightarrow C^*(X)$:
\[
\begin{tikzcd}
0 \arrow[r] & \lim\limits_{K\to\infty} K_*\left( \frac{C^*(Y'_K)}{C^*(Y'_K) \cap I'_{<n}} \right) \arrow[r] \arrow[d] & K_*\left( \frac{C^*(X')}{I'_{<n}} \right) \arrow[r, "p_*"] \arrow[d] & K_*\left( \frac{C^*(X')}{C^*(X'; F^{(n)}) + I'_{<n}} \right) \arrow[r] \arrow[d, "\cong"', "i_*"] & 0 \\
& K_*\left( \frac{C^*(X; F^{(n)})}{C^*(X; F^{(n)}) \cap I_{<n}} \right) \arrow[r, "\iota_*"] & K_*\left( \frac{C^*(X)}{I_{<n}} \right) \arrow[r, "q_*"] & K_*\left( \frac{C^*(X)}{C^*(X; F^{(n)}) + I_{<n}} \right) &
\end{tikzcd}
\]

We first verify the exactness of the top row. For any $K$, the spatial truncation map $T \mapsto \chi_{Y'_K} T \chi_{Y'_K}$ defines a bounded $*$-homomorphism on $C^*(X')$ because $Y'_K$ is infinitely far from its complement under the separated metric. Furthermore, since the conditions defining $I'_{<n}$ depend solely on the first $n-1$ coordinates, this truncation simply sent $I'_{<n}$ to $I'_{<n}\cap C*(Y'_K)$, and thus descends to a well-defined left inverse for the quotient inclusion $\frac{C^*(Y'_K)}{C^*(Y'_K) \cap I'_{<n}} \hookrightarrow \frac{C^*(X')}{I'_{<n}}$. Taking the direct limit over $K$, the top-left horizontal map is injective, which implies that the map $p_*$ is surjective by the exactness of the sequence for $X'$.

Next, we prove that the rightmost vertical map $i_*$ is an isomorphism. The sparse condition implies that $d_{X_n}(Z_k, Z_{k'}) \to \infty$ as $\max(k, k') \to \infty$ for $k \ne k'$. Thus, for any propagation $R > 0$, there exists a sufficiently large $K$ such that the distance between any two distinct components $Z_k, Z_{k'}$ for $k, k' > K$ is strictly greater than $R$. Therefore, any operator $T \in \mathbb{C}[X]$ with propagation $R$ cannot contain non-zero matrix entries connecting $X_{(k)}$ and $X_{(k')}$ outside of $Y_K$. This means $T$ can be decomposed as $T = T' + T_K$, where $T'$ is block-diagonal with respect to the components $(X_{(k)})_{k \in \mathbb{N}}$, and $T_K$ is supported entirely on $Y_K \times Y_K$.  Hence, $T' \in \mathbb{C}[X']$. Since $Y_K$ is uniformly bounded in the $n$-th coordinate, $T_K \in \mathbb{C}[X; F^{(n)}]$. This provides the decomposition $\mathbb{C}[X] = \mathbb{C}[X'] + \mathbb{C}[X; F^{(n)}]$, which yields a canonical algebraic isomorphism:
\[
\frac{\mathbb{C}[X]}{\mathbb{C}[X; F^{(n)}]} \cong \frac{\mathbb{C}[X']}{\mathbb{C}[X'] \cap \mathbb{C}[X; F^{(n)}]} = \frac{\mathbb{C}[X']}{\mathbb{C}[X'; F^{(n)}]}.
\]
Passing to the $C^*$-completion yields $\frac{C^*(X)}{C^*(X; F^{(n)})} \cong \frac{C^*(X')}{C^*(X'; F^{(n)})}$. Quotienting further by the ideal $I_{<n}$ preserves this canonical identification, which means $\frac{C^*(X)}{C^*(X; F^{(n)}) + I_{<n}} \cong \frac{C^*(X')}{C^*(X'; F^{(n)}) + I'_{<n}}$. Thus, the map $i_*$ is an isomorphism.

Since $i_* \circ p_*$ is surjective, a standard diagram chase ensures that the bottom horizontal map $q_*$ must be surjective. By the six-term exact sequence, this is equivalent to the inclusion $\iota_*$ being injective, which completes the proof.
\end{proof}

\begin{proof}[Proof of \Cref{thm:iterated-reduction}; when $X_n$ is sparse]
Using the notation in the proof of \Cref{lem:sparse-face-injection}, we have the following commutative diagram:
\[
\begin{tikzcd}
0 \arrow[d] & 0 \arrow[d] \\
\displaystyle \lim_{d\to\infty}
K_*\left(\frac{C^*_L(P_d(X); F^{(n)})}{C^*_L(P_d(X); F^{(n)}) \cap C^*_L(P_d(X); I_{<n})} \right)
  \arrow[r,"\mu"] \arrow[d]
& K_*\left( \frac{C^*(X; F^{(n)})}{C^*(X; F^{(n)}) \cap I_{<n}} \right)  \arrow[d,"\iota_*"] \\
\displaystyle \lim_{d\to\infty} K_*\left(C^*_{L,\infty,F^{(1)},\dots,F^{(n-1)}}\left(P_d(X)\right)\right)
  \arrow[r,"\mu_{F^{(1)},\dots,F^{(n-1)},\infty}^{(n-1)}"] \arrow[d]
& K_*\left(C^*_{\infty,F^{(1)},\dots,F^{(n-1)}}\left(X\right)\right) \arrow[d] \\
\displaystyle \lim_{d\to\infty}
  K_*\left(C^*_{L,\infty,F^{(1)},\dots,F^{(n)}}\left(P_d(X)\right)\right)
  \arrow[r,"\mu_{F^{(1)},\dots,F^{(n)},\infty}^{(n)}"] \arrow[d]
& K_*\left(C^*_{\infty,F^{(1)},\dots,F^{(n)}}\left(X\right)\right)
  \\
0
\end{tikzcd}
\]
By \Cref{lem:sparse-face-injection}, the top right vertical map $\iota_*$ is injective. Note that the top horizontal map $\mu$ is naturally identified with the iterated relative assembly map for the tuple $(F^{(n)}; F^{(1)}\cap F^{(n)}, \dots, F^{(n-1)}\cap F^{(n)})$. By our assumption, both $\mu$ and the bottom horizontal map $\mu_{F^{(1)},\dots,F^{(n)},\infty}^{(n)}$ are injective. Since $\iota_*$ is also injective, a standard diagram chase immediately yields that the middle horizontal map $\mu_{F^{(1)},\dots,F^{(n-1)},\infty}^{(n-1)}$ is injective. This establishes the iterated relative coarse Novikov conjecture for $(X; F^{(1)}, \dots, F^{(n-1)})$ and completes the proof.
\end{proof}

\section{Main result for product spaces}

Our main theorem is about the higher index problem for product spaces, which is stated as follows:

\begin{theorem}\label{mainthm}
Let $N\ge 2$, and let $X_1,\dots,X_N$ be proper metric spaces with bounded geometry. 
Assume that for each $i$ the space $X_i$ admits a fibred coarse embedding into one of the following model targets:
a Hilbert space, a Hadamard manifold, or an $\ell^p$-space with $1\le p<\infty$.
Let $X := \prod_{i=1}^N X_i$ be the finite product equipped with the $\ell^2$-product metric, and let $F^{(i)}$ denote the $i$-th coordinate face. Then the following hold:
\begin{enumerate}
    \item[(1)] The \textbf{coarse Novikov conjecture holds for the finite product $X$}.
    \item[(2)] For any $1 \le k \le N$ and any distinct indices $i_1, \dots, i_k \in \{1, \dots, N\}$, the \textbf{iterated relative coarse Novikov conjecture holds for the tuple $(X; F^{(i_1)}, \dots, F^{(i_k)})$}.
\end{enumerate}
\end{theorem}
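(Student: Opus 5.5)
The proof will be a double induction on the number of factors $N$ and the number of faces $k$, run downward in $k$. Part (1) is the case $k=0$ of part (2), since the level-$0$ iterated relative Roe algebra is $C^*(X)$ and the level-$0$ assembly map is the usual coarse assembly map $\mu$. So it suffices to prove (2) for all $0\le k\le N$, for all products of at most $N$ factors each admitting an FCE into one of the allowed targets. The faces can be relabelled freely, so we may take $(i_1,\dots,i_k)=(1,\dots,k)$. We also assume the induction hypothesis for products with fewer than $N$ factors; for one factor this is the known coarse Novikov conjecture for spaces with an FCE \cite{CWY13,GLWZ24,GWY21}.

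\emph{Top level $k=N$.} By \Cref{prop:sum-ideal-implies-equality} and \Cref{prop:iterK-equals-unionK}, the level-$N$ assembly map for $(X;F^{(1)},\dots,F^{(N)})$ is identified with the relative assembly map $\mu_{F,\infty}$ for $F=\bigcup_i F^{(i)}$. Now $F$ is coarsely equivalent to the $1$-boundary $F_1$: each $F^{(i)}\subseteq F_1^{(i)}\subseteq \Pen(F^{(i)},1)$, and ghostly ideals only see bounded neighbourhoods. Hence the level-$N$ statement is exactly \Cref{cor:relative-cn-product-fce}, which combines \Cref{fcebdry} with \Cref{thm: RCNC for RFCE} via the product coarsely proper algebra $\mathcal A(E)$ of \Cref{exa: coarsely proper algebra}. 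A Hilbert space target is handled as the $\ell^2$ case.

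\emph{Descending step $k=n\Rightarrow k=n-1$.} Apply \Cref{thm:iterated-reduction} with this $n$. It needs two inputs. The first is the level-$n$ statement for $(X;F^{(1)},\dots,F^{(n)})$, which holds by the downward induction. The second is the iterated relative CNC for $(F^{(n)};F^{(1)}\cap F^{(n)},\dots,F^{(n-1)}\cap F^{(n)})$. Here $F^{(n)}\cong\prod_{i\neq n}X_i$ is a product of $N-1$ factors, each with an FCE, and $F^{(i)}\cap F^{(n)}$ is precisely the $i$-th coordinate face of this smaller product. So the second input is part (2) for $N-1$ factors with $n-1$ faces, which holds by the outer induction. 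When $N-1=1$ and $n-1\ge1$, the face is a point of a single factor and the relative statement follows from \Cref{thm: RCNC for RFCE}, since bounded $Y$ reduces to the usual FCE. Iterating down from $k=N$ to $k=0$ gives (2) for every $k$ and hence (1).

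\emph{Expected obstacle.} The main difficulty is checking that the hypotheses of \Cref{thm:iterated-reduction} really match the objects produced by the induction. Specifically, one must identify the top-row map in the sparse case, and the intersection ideal in \Cref{lem:direct-limit-zero-maps}, with the assembly map of the lower-dimensional product $F^{(n)}$ and its faces. This requires that the ghostly ideals of $F^{(i)}\cap F^{(n)}$ inside $F^{(n)}$ agree with the restrictions of those of $X$. I would verify this directly from \Cref{lem:face-intersection} and the product structure of the $\ell^2$ metric: the distance to $F^{(i)}$ depends only on the $i$-th coordinate, so neighbourhoods restrict coordinatewise. I would also check that replacing $F_1$ by $\bigcup F^{(i)}$ at the top level is harmless via \Cref{mono}.
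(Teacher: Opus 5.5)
Your proposal is correct and follows the paper's route: induction on the number of factors, getting the top level $k=N$ from \Cref{cor:relative-cn-product-fce} via \Cref{prop:iterK-equals-unionK}, then peeling off faces one at a time with \Cref{thm:iterated-reduction}, using the $(N-1)$-factor statement for $F^{(n)}$ and its faces as the second input. The differences are cosmetic: you start the outer induction at $N=1$, so the paper's explicit $N=2$ base case becomes an instance of the general step, with the pair $(X_j,\{o_j\})$ handled by \Cref{thm: RCNC for RFCE}; and you make explicit the identification $I_G(\bigcup_i F^{(i)})=I_G(F_1)$, which the paper takes for granted.
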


\begin{proof}
We prove the theorem by induction on the number of factors $N$.

For the base case $N=2$, let $X = X_1 \times X_2$. The two coordinate faces are $F^{(1)}$ and $F^{(2)}$. Note that $F^{(1)}=X_2$, and $F^{(2)}=X_1$. Since $X_1$ and $X_2$ admit fibred coarse embeddings into the prescribed model spaces, the coarse Novikov conjecture holds for themselves and the pairs $(X_1, o_1)$, $(X_2, o_2)$. 

By \Cref{cor:relative-cn-product-fce}, the relative coarse Novikov conjecture holds for the pair $(X, F^{(1)} \cup F^{(2)})$. Using \Cref{prop:iterK-equals-unionK}, the iterated relative coarse Novikov conjecture holds for the tuple $(X; F^{(1)}, F^{(2)})$. This establishes statement (2) for $k=2$. Since the relative coarse Novikov conjecture holds for $(X_1,o_1)$, applying \Cref{thm:iterated-reduction} to $F^{(2)}$, we deduce that the iterated relative coarse Novikov conjecture holds for $(X; F^{(1)})$. Applying the reduction theorem once more to $F^{(1)}$, we obtain the global coarse Novikov conjecture for $X$. This establishes both statements for $N=2$.

Now assume that $N \geq 3$, and suppose the theorem holds for all products of up to $N-1$ spaces satisfying the hypotheses. Let $X = \prod_{i=1}^N X_i$. 

Let $F = \bigcup_{i=1}^N F^{(i)}$ be the $1$-boundary of $X$. By \Cref{cor:relative-cn-product-fce}, $X$ admits a fibred coarse embedding relative to $F$, meaning the iterated relative coarse Novikov conjecture holds for the maximal tuple $(X; F^{(1)}, \dots, F^{(N)})$. This establishes (2) for $k=N$.

To prove (2) for $k < N$ and (1), we iteratively apply \Cref{thm:iterated-reduction} to peel off the faces one by one. A crucial observation is that any face $F^{(n)} \cong \prod_{j \ne n} X_j$ is itself a product of $N-1$ factors. Furthermore, the intersection of $F^{(n)}$ with any other face $F^{(i)}$ ($i \ne n$) corresponds to a face of $F^{(n)}$. 

By our inductive hypothesis applied to the $(N-1)$-product space $F^{(n)}$, the coarse Novikov conjecture holds for $F^{(n)}$. Additionally, the iterated relative coarse Novikov conjecture holds for any tuple of its faces. This satisfies the prerequisites of \Cref{thm:iterated-reduction}.

Thus, applying \Cref{thm:iterated-reduction} to the tuple $(X; F^{(1)}, \dots, F^{(N)})$, we can peel off $F^{(N)}$ to deduce that the iterated relative coarse Novikov conjecture holds for $(X; F^{(1)}, \dots, F^{(N-1)})$. Since the ordering of the faces is arbitrary, this holds for any sub-tuple of length $N-1$. Continuing this process inductively, we establish (2) for any $k \ge 1$. 

Finally, applying the reduction theorem to the single-face case $(X; F^{(1)})$, we conclude that the coarse Novikov conjecture holds for the global space $X$.
\end{proof}

Combining \cite{CWW13,SWW21,Yu05}, we obtain the following result.

\begin{corollary}[Products of box spaces and warped cones]
Let \(N,M\ge 1\).

For \(i=1,\dots,N\), let \(\Gamma_i\) be a finitely generated residually finite hyperbolic group, and
let \(\{\Gamma_{i,n}\}_{n\in\mathbb N}\) be a filtration of \(\Gamma_i\).
Write \(\Box_{\{\Gamma_{i,n}\}}\Gamma_i\) for the associated box space.

For \(j=1,\dots,M\), let \(\Lambda_j\) be a finitely generated discrete group acting on a compact manifold \(M_j\),
and assume that
\begin{enumerate}
  \item the action \(\Lambda_j\curvearrowright M_j\) is linearisable in an \(\ell^{p_j}\)-space and is free
  (or \(M_j\) contains a dense free orbit);
  \item \(\Lambda_j\) admits a proper affine isometric action on an \(\ell^{p_j}\)-space,
\end{enumerate}
for some \(1\le p_j<\infty\).
Let \(\mathcal O_{\Lambda_j}(M_j)\) be the corresponding warped cone.

Then the coarse Novikov conjecture holds for the mixed finite product
\[
X\ :=\ \Big(\prod_{i=1}^N \Box_{\{\Gamma_{i,n}\}}\Gamma_i\Big)\ \times\
\Big(\prod_{j=1}^M \mathcal O_{\Lambda_j}(M_j)\Big).
\]
\end{corollary}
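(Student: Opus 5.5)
The corollary will follow by applying Theorem~\ref{mainthm}(1) to the $N+M$ factors $\Box_{\{\Gamma_{i,n}\}}\Gamma_i$ and $\mathcal O_{\Lambda_j}(M_j)$. All the work is in checking the hypotheses of that theorem factor by factor, and each check comes from a known result. Once they are verified, the conclusion is immediate. If $N+M=1$, the product is a single factor and Theorem~\ref{mainthm} does not apply since it requires at least two factors. In that case I would instead use the classical result for spaces admitting a fibred coarse embedding (\cite{CWY13} for Hilbert targets, and \cite{GLWZ24} for $\ell^p$ targets). Equivalently, one can take the product with a point and use the induction base.

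\textbf{Box space factors.} Each box space $\Box_{\{\Gamma_{i,n}\}}\Gamma_i$ is a coarse disjoint union of the finite quotients $\Gamma_i/\Gamma_{i,n}$, with word metrics from a fixed finite generating set. It is therefore a proper metric space with bounded geometry, with a uniform bound coming from the size of the generating set. Since $\Gamma_i$ is hyperbolic, it is coarsely embeddable into Hilbert space; more precisely, by Yu~\cite{Yu05} it admits a proper affine isometric action on some $\ell^p$-space. By the result of Chen--Wang--Wang~\cite{CWW13}, a box space of a residually finite group admits a fibred coarse embedding into Hilbert space exactly when the group is coarsely embeddable (the a-T-menable-type direction suffices here). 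So each box space factor admits a fibred coarse embedding into a Hilbert space, which is one of the allowed model targets. If one prefers an $\ell^p$ target, the $\ell^p$ version of the same argument also works.

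\textbf{Warped cone factors.} Each warped cone $\mathcal O_{\Lambda_j}(M_j)$ is built over a compact manifold. After passing to a discrete net it is a bounded-geometry proper metric space, and replacing the space by a net does not change coarse invariants. Under hypotheses (1) and (2), the result of Sawicki--Wu--Wang type~\cite{SWW21} gives a fibred coarse embedding of $\mathcal O_{\Lambda_j}(M_j)$ into an $\ell^{p_j}$-space with $1\le p_j<\infty$. That is also an allowed target in Theorem~\ref{mainthm}.

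\textbf{Main obstacle.} The only delicate point is matching the exact hypotheses of the cited results to the formulations stated here. This means the linearisability and freeness (or dense free orbit) conditions, and the requirement that the box-space filtration be nested with trivial intersection. I would also check that the coarse Novikov conjecture is invariant under coarse equivalence, so that passing to nets and to the $\ell^2$-product metric is harmless. With these checks done, Theorem~\ref{mainthm}(1) applied to the product of all $N+M$ factors gives the claim.
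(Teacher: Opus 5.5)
Your overall plan (check each factor's hypotheses from the literature, then apply Theorem~\ref{mainthm}(1)) is what the paper does by combining \cite{CWW13,SWW21,Yu05}, and your warped-cone step is fine. The box-space step as written fails, however. The Chen--Wang--Wang theorem does not say that a box space $\Box\Gamma$ admits a fibred coarse embedding into Hilbert space exactly when $\Gamma$ is coarsely embeddable. It says this happens exactly when $\Gamma$ has the Haagerup property. Hyperbolic groups are always coarsely embeddable, but many are not a-T-menable. For example, cocompact lattices in $Sp(n,1)$ with $n\ge 2$ are residually finite hyperbolic groups with property (T). Their box spaces are expanders, and by the Chen--Wang--Wang characterization they admit no fibred coarse embedding into Hilbert space. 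So your claim that every box-space factor fibred-coarsely embeds into a Hilbert space is false in general, and coarse embeddability of $\Gamma_i$ is the wrong input.

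The variant you mention only as optional is in fact the necessary argument. By \cite{Yu05}, each hyperbolic $\Gamma_i$ admits a proper affine isometric action on $\ell^{p_i}$ for some (typically large) $p_i<\infty$. Running the Chen--Wang--Wang construction with this action in place of a Hilbert-space action gives a fibred coarse embedding of $\Box_{\{\Gamma_{i,n}\}}\Gamma_i$ into $\ell^{p_i}$. That is an allowed model target in Theorem~\ref{mainthm}. Make this the box-space argument and drop the route through Hilbert space. Separately, the case $N+M=1$ that you treat cannot occur, since $N,M\ge 1$.
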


\medskip

\noindent\textsc{(L.~Guo)} Shanghai Institute for Mathematics and Interdisciplinary Sciences, Shanghai 200433, P.\ R.\ China.\\
\textit{Email address:} \texttt{liang\_guo@fudan.edu.cn}

\medskip

\noindent\textsc{(Z.~Luo)} College of Data Science, Jiaxing University, Jiaxing, Zhejiang, P.\ R.\ China.
\textit{Email address:} \texttt{zluo@zjxu.edu.cn}

\medskip

\noindent\textsc{(Q.~Wang)} Research Center for Operator Algebras, School of Mathematical Sciences, East China Normal University, Shanghai 200241, P.\ R.\ China.\\
\textit{Email address:} \texttt{qwang@math.ecnu.edu.cn}

\end{document}